\newtheorem{theorem}{Theorem}[section]
\newtheorem{corollary}[theorem]{Corollary}
\newtheorem{lemme}[theorem]{Lemma}
\newtheorem{defin}{Definition}[section]
\allowdisplaybreaks \theoremstyle{definition}
\theoremstyle{remark} \numberwithin{equation}{section}
\begin{document}

\title [Beurling's theorem for the Clifford-Fourier transform]{Beurling's theorem for  the  Clifford-Fourier transform\\}%
\author[ R. Jday, J. El Kamel]{ Rim Jday \qquad and \qquad Jamel El Kamel }
 \address{Jamel El Kamel. University of Monastir, Faculty of Sciences Monastir, 5000 Tunisia.}
 \email{jamel.elkamel@fsm.rnu.tn}
 \address{Rim Jday . University of Tunis El Manar, Faculty of Sciences Tunis, 2092 Tunis, Tunisia.}
 \email{rimjday@live.fr}
 \begin{abstract} We give a generalization of Beurling's theorem for the Clifford-Fourier transform. Then, analogues of Hardy, Cowling-Price and  Gelfand-Shilov  theorems are obtained in Clifford analysis. 
\end{abstract}
\maketitle
{\it Keywords:} Clifford analysis; Clifford-Fourier transform; Uncertainty principles; Beurling's theorem.\\

\noindent MSC (2010): 42B10, 30G35.\\
\section{Introduction}
Uncertainty principle asserts that a function  and its Fourier transform cannot both be sharply localized. In Euclidien spaces, 
many theorems are devoted  to clarify it  such as Beurling, Cowling and price, Hardy, Heisenberg..\\
 Beurling theorem   which is given by A. beurling \cite{key-1} and proved by H$\rm{\ddot{o}}$rmander \cite{key-2} is the the most relevant one: that it gives Hardy, Cowling-Price and Gelfand-Shilov theroems.
\begin{theorem}
 Let $f\in L^2(\mathbb{R})$  be such that
 $$\displaystyle\int_{\mathbb{R}}\int_{\mathbb{R}}|f(x)||\widehat{f}(y)|e^{|x||y|}dxdy <\infty,$$
  then $f=0$.
  \end{theorem}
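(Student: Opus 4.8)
The plan is to run the classical complex-analytic argument of Hörmander, exploiting the fact that the hypothesis is symmetric under the exchange $f\leftrightarrow\widehat f$ (since $\widehat{\widehat f}(x)=2\pi f(-x)$). First I would dispose of the integrability bookkeeping: because $e^{|x||y|}\ge 1$, finiteness of the double integral already forces $\|f\|_{L^1}\,\|\widehat f\|_{L^1}<\infty$, so if $f\not\equiv 0$ then both $f$ and $\widehat f$ lie in $L^1(\mathbb R)$, are continuous and bounded, and Fourier inversion holds pointwise. I then argue by contradiction, assuming $f\not\equiv 0$.

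Next I extract analyticity from the weight. By Tonelli, $\int_{\mathbb R}|f(x)|\,m(x)\,dx<\infty$ where $m(x):=\int_{\mathbb R}|\widehat f(y)|e^{|x||y|}\,dy$, so $m(x)<\infty$ for almost every $x$ in $\{f\neq 0\}$. Choosing such an $x_0\neq 0$ gives $\int_{\mathbb R}|\widehat f(y)|e^{|x_0||y|}\,dy<\infty$, whence $f(z)=\frac{1}{2\pi}\int_{\mathbb R}\widehat f(y)e^{iyz}\,dy$ extends to a bounded holomorphic function on the strip $|\Im z|<|x_0|$, and symmetrically $\widehat f$ extends to a strip. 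The real work is to promote these strip extensions to entire functions. Here the full strength of the mixed weight enters through a case split: if $f$ is not essentially compactly supported, then $\{|x|>c,\ f\neq 0\}$ has positive measure for every $c$, so $m$ is finite at points of arbitrarily large $|x|$, yielding $\int_{\mathbb R}|\widehat f(y)|e^{c|y|}\,dy<\infty$ for all $c>0$ and hence $f$ entire; in the complementary case $f$ is compactly supported, so $\widehat f$ is entire of finite exponential type by Paley--Wiener. Either way one member of the pair is entire, and it comes equipped with the pointwise bound $|f(\xi+i\eta)|\le\frac{1}{2\pi}\,m(\eta)$, where $m$ is precisely the quantity the hypothesis keeps integrable against $|f|$.

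Finally I would kill this entire function by a Phragmén--Lindel\"of argument carried out quadrant by quadrant. The control of $m$ coming from $\int_{\mathbb R}|f(x)|\,m(x)\,dx<\infty$, together with the decay of $f$ and $\widehat f$ along the real axis forced by $L^1\cap L^2$, should pin the growth to at most order two and identify $f$ with a Gaussian $Ce^{-\lambda z^2}$, a Hardy-type conclusion; in the compact-support case the same machinery instead plays the finite type of $\widehat f$ against its atypically strong real-axis decay. A direct computation then shows that for the critical Gaussian the quadratic exponent $-\lambda(x^2+y^2)+|x||y|$ is only negative semidefinite, so the Beurling integral diverges along the diagonal $x=y$ unless $C=0$, giving $f\equiv 0$. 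I expect the genuine obstacle to be the middle step: converting the nonlinear coupling $e^{|x||y|}$ into a usable bound on each horizontal line, upgrading strip-holomorphy to an entire function of the correct order, and feeding exactly the right estimate into the Phragmén--Lindel\"of principle.
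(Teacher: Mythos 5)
The paper never proves this statement: Theorem~1.1 is quoted from Beurling--H\"ormander \cite{key-1,key-2} as motivation, and the only Beurling-type argument actually carried out in the paper is the Clifford analogue (Theorem~4.3), which follows the Bonami--Demange--Jaming route of \cite{key-3}. Judged against the classical proofs, your preliminary reductions are sound: $e^{|x||y|}\ge 1$ forces $f,\widehat f\in L^1$; Fubini yields a point $x_0\neq 0$ of $\{f\neq 0\}$ with $\int_{\mathbb R}|\widehat f(y)|e^{|x_0||y|}\,dy<\infty$, hence a holomorphic extension of $f$ to a strip; and your dichotomy upgrades one of $f,\widehat f$ to an entire function. (In fact the compact-support branch is already over at that point: a nonzero $f$ vanishing outside a bounded set cannot agree a.e.\ with a function holomorphic on a strip, so you may assume outright that both $f$ and $\widehat f$ are entire; no Paley--Wiener or ``finite type versus decay'' argument is needed there.) The closing computation is also essentially right, up to the slip that for $f=Ce^{-\lambda x^2}$ the exponent is $-\lambda x^2-y^2/(4\lambda)+|x||y|=-\bigl(\sqrt\lambda\,|x|-|y|/(2\sqrt\lambda)\bigr)^2$, which vanishes on the ray $|y|=2\lambda|x|$ rather than on the diagonal; the divergence conclusion is unaffected.

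The genuine gap is exactly where you suspect it. ``Phragm\'en--Lindel\"of \dots should pin the growth to at most order two and identify $f$ with a Gaussian'' is not an argument: the hypothesis is a coupled, integrated condition on the pair $(f,\widehat f)$, not a pointwise Gaussian bound on either function, so Hardy's theorem cannot be cited, and Phragm\'en--Lindel\"of applied to the entire extension of $f$ alone has nothing to bite on. Every known proof manufactures at this point a single auxiliary entire function mixing $f$ with its quarter-turn rotate, chosen so that the Beurling integral controls it on the boundary rays of each quadrant, and it is to \emph{that} function that Phragm\'en--Lindel\"of is applied: H\"ormander uses $w\mapsto\iint f(x)\widehat f(\xi)e^{-iwx\xi}\,dx\,d\xi$, holomorphic in the strip $|\mathrm{Im}\,w|<1$; Bonami--Demange--Jaming, and the proof of Theorem~4.3 in this paper, first mollify to $g=f\ast e^{-t^2/2}$, verify the estimates i)--4i), and then apply the sector lemma (Theorem~4.2 here) to the primitive $\Gamma(z)=\int_0^z g(u)g(iu)\,du$, concluding that $g(z)g(iz)$ is constant (polynomial in the $N>0$ version) and hence that $g$, and then $f$, is a Gaussian multiple. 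Until you construct such an auxiliary function and establish its boundary bounds, the middle of your proof is absent --- and that middle is the theorem.
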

This theorem is generalized by Bonami et al \cite{key-3} by giving solutions in terms of Hermite functions.\\
\begin{theorem}Let $N\geq 0$. Assume $f\in L^2(\mathbb{R}^m)$ satisfying 
 $$\displaystyle\int_{\mathbb{R}^m}\int_{\mathbb{R}^m}\frac{|f(x)||\widehat{f}(y)|}{(1+|x|+|y|)^N}e^{|x||y|}dxdy <\infty,$$
  then $f(x)=P(x)e^{-a|x|^2}$ where $P$ is a polynomial of degree $<\frac{ N-m}{2}$ and $a>0$.
\end{theorem}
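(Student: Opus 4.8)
The plan is to convert the mixed integrability hypothesis into a statement about entire functions, and then to classify the latter by a Phragm\'en--Lindel\"of argument together with a Hadamard-type factorisation. Assume $f\not\equiv0$, so $\widehat f\not\equiv0$ as well. Since the integrand is nonnegative, Tonelli's theorem lets me write the hypothesis as $\int_{\mathbb R^m}|f(x)|\,\psi(x)\,dx<\infty$, where $\psi(x)=\int_{\mathbb R^m}|\widehat f(y)|\,e^{|x||y|}(1+|x|+|y|)^{-N}\,dy$. Hence $\psi(x)<\infty$ for almost every $x$ in the positive-measure set $\{f\neq0\}$. In the generic case this set is unbounded, so $\psi(x)<\infty$ holds for $x$ of arbitrarily large modulus, and since the polynomial denominator is negligible against $e^{|x||y|}$ this forces $\int_{\mathbb R^m}|\widehat f(y)|\,e^{R|y|}\,dy<\infty$ for every $R>0$. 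In particular $\widehat f\in L^1$, Fourier inversion applies, and $f$ agrees almost everywhere with the entire function $F(z)=\int_{\mathbb R^m}\widehat f(y)\,e^{i\langle z,y\rangle}\,dy$ on $\mathbb C^m$; by symmetry $\widehat f$ likewise extends to an entire function. (The degenerate case where $f$ or $\widehat f$ has bounded support is disposed of separately, yielding $f=0$.)

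The heart of the proof, and the step I expect to be the main obstacle, is to upgrade mere entireness to genuine order-two Gaussian control, namely $|f(x)|\lesssim e^{-a|x|^2}$ and $|\widehat f(y)|\lesssim e^{-b|y|^2}$ for suitable $a,b>0$ (up to a polynomial factor). The extension bound $|F(x+i\xi)|\le A(|\xi|)$ with $A(\rho)=\int|\widehat f(y)|\,e^{\rho|y|}\,dy$ is finite for every $\rho$, but this alone does not fix the growth rate of $A$; one must pin it to $A(\rho)\lesssim e^{c\rho^2}$. This is where the exact shape of the weight matters: with the normalisation in force the weight $e^{|x||y|}$ is critical for Gaussians, so the quadratic form $a|x|^2+b|y|^2-|x||y|$ in the variables $(|x|,|y|)$ degenerates, vanishing along a single ridge $|y|=\kappa|x|$ for an appropriate $\kappa>0$; this is exactly the Gaussian regime. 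I would restrict $F$ to complex lines $z=w\omega$, $\omega\in S^{m-1}$, and run a Phragm\'en--Lindel\"of argument to turn the integrated control concentrated on this ridge into the pointwise type bound $|F(w\omega)|\le C\,e^{a(\operatorname{Im}w)^2}$, the Gaussian decay of $f$ on the real axis coming from the dual estimate.

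Granting these bounds, the classification is routine. On each line the entire function $H_\omega(w)=F(w\omega)\,e^{aw^2}$ has order at most $2$; it is bounded up to a polynomial on the real axis, since $|f(x\omega)|\,e^{ax^2}\lesssim 1$, and on the imaginary axis, since the type bound gives $|F(i\eta\omega)|\,e^{-a\eta^2}\lesssim 1$. A Phragm\'en--Lindel\"of argument in each of the four right-angle sectors then shows $H_\omega$ has at most polynomial growth, so it is a polynomial; gluing the line-wise factorisations produces a global identity $F(z)=P(z)\,e^{-a\langle z,z\rangle}$, that is $f(x)=P(x)\,e^{-a|x|^2}$ with $P$ a polynomial. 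For the degree bound I would substitute this form back into the hypothesis: the transform of $P(x)e^{-a|x|^2}$ equals $Q(y)e^{-b|y|^2}$ with $\deg Q=\deg P$, and the parameters are such that the weight is critical, so convergence of the resulting integral is decided entirely by the behaviour along the ridge $|y|=\kappa|x|$, where the Gaussian exponent vanishes; counting there the powers of $|x|$, $|y|$ and of $(1+|x|+|y|)^N$ yields $\deg P<\frac{N-m}{2}$. The delicate point, once more, is the second paragraph: extracting the quadratic growth rate from the coupled, merely integrated hypothesis, whereas qualitative entireness comes essentially for free.
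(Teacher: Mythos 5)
Your proposal has a genuine gap at precisely the step you yourself flag as ``the main obstacle'': nothing in the sketch actually produces the pointwise Gaussian bounds $|f(x)|\lesssim e^{-a|x|^2}$ or the type bound $|F(w\omega)|\le C e^{a(\operatorname{Im}w)^2}$ from the integrated hypothesis. Your Phragm\'en--Lindel\"of step in the third paragraph already \emph{assumes} $|f(x\omega)|e^{ax^2}\lesssim 1$ on the real axis; that estimate is essentially the conclusion of a Hardy-type theorem and does not follow from the hypothesis by any argument you give. An $L^1$-type condition against the weight $e^{|x||y|}(1+|x|+|y|)^{-N}$ yields no pointwise decay, and Phragm\'en--Lindel\"of needs bounds on the boundary of sectors, which you do not have; the sentence ``I would restrict $F$ to complex lines and run a Phragm\'en--Lindel\"of argument to turn the integrated control \ldots into the pointwise type bound'' is a placeholder for the entire difficulty, not a proof of it. (Your first step, obtaining $\int|\widehat f(y)|e^{R|y|}\,dy<\infty$ for all $R$ and hence entireness, is fine; that part is indeed the cheap part.)

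The proof in Bonami--Demange--Jaming, which this paper follows for its Clifford analogue in Theorem 4.3, closes the gap with two devices absent from your plan. First, one regularizes: set $g=f\ast e^{-|\cdot|^2/2}$, so that $\widehat g=\widehat f\,e^{-|\cdot|^2/2}$ enjoys a genuine pointwise bound $|\widehat g(y)|\le Ce^{-|y|^2/4}$ (this needs only integrability of $\widehat f$ against a mild polynomial weight, which does follow from the hypothesis as in Lemma 4.1); this makes $g$ entire of order $2$ with explicit control and is later undone at the end. Second --- and this is the key idea --- one studies the product $g(z)g(iz)$, via its primitive $\Gamma(z)=\int_0^{z_1}\cdots\int_0^{z_m}g(u)g(iu)\,du$: if $g=Pe^{-a z^2}$ then the Gaussian factors cancel in this product, and conversely one shows directly from the integrated estimate $\int_{|x|\le R}\int|g(x)||\widehat g(y)|e^{|x||y|}\,dx\,dy\le C(1+R)^N$ together with the angular-sector lemma (Theorem 4.2) that $g(z)g(iz)$ has polynomial growth of degree controlled by $N$, hence is a polynomial; only then does Hadamard factorization deliver $g=Pe^{-az^2}$, with the degree bound coming from the exponent $N$ in the sector estimate. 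Without this product trick, or an equivalent substitute, your argument stalls exactly where you predict it will; the degree count you propose by substituting the Gaussian form back into the hypothesis is reasonable but lies downstream of the missing step.
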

\noindent In 2010, Kawazoe and Majjaoli provide Beurling theorem for the Dunkl transform \cite{key-4}. Moreover, Parui and Pusti give an alternative proof similar of that one used in \cite{key-3} for the Dunkl transform (see \cite{key-5}).\\
Our aim is to establish Beurling theorem for the Clifford-Fourier transform given by Brackx et al \cite{key-6} and studied in \cite{key-7,key-8}.\\
This paper is organized as follows. In section 2, we recall  Clifford algebra and some notations that will be usefull in the sequel. In section 3, we remind the Clifford-Fourier transform and its properties. In section 4, we prove Beurling theorem for the Clifford-Fourier transform. Section 5 contains other uncertainty principles in Clifford analysis : Hardy, Cowling and Price and Gelfand Shilov.

\section{Notations and preliminaries}
 Clifford algebra  $Cl_{0,m}$  over $\mathbb{R}^m$ is defined as an algebra  generated by the $2^m$-dimensional basis:
\begin{equation}
\{1,e_1,e_2,e_3,..,e_m,e_{12},...,e_{12..m} \},
\end{equation} 
where the multiplication of vectors from this  basis is governed by the  rules:

\begin{equation}
 \left\{
\begin{array}{cc}
 \displaystyle  e_ie_j=-e_je_i,\qquad\qquad \text{if}\quad i\neq j; \qquad\\
   \displaystyle e_i^2=-1,\qquad\qquad\quad \forall 1\leq i\leq m. \\

\end{array}
\right.
 \end{equation} 
  Clifford algebra $Cl_{0,m}$ is decomposed as :
 \begin{equation}
 Cl_{0,m}=\oplus_{k=0}^m Cl_{0,m}^k,
  \end{equation}
 where 
 $Cl_{0,m}^k=span\{e_{i_1}...e_{i_k}, i_1<..<i_k\}$\\
 A multivector $x$ on the Clifford algebra can be presented by:
  \begin{equation}
 x=\displaystyle\sum_{A\in J}x_Ae_A,
  \end{equation} 
 where $J :=\{0,1,..,m,12,..,12..m\},$ $x_A$ real number  and $e_A$ belongs to the  basis of $Cl_{0,m}$ defined above.\\
For each multivector $x$, the Clifford  norm is:
  \begin{equation}
 {||x||}_c={\left(\displaystyle\sum_{A\in J} x_A^2\right)}^{\frac{1}{2}}.
  \end{equation}
Thus, a  vector  $x$  in $Cl_{0,m}$ can be identify with 
\begin{equation}
 x=\displaystyle\sum_{i=1}^mx_ie_i,
\end{equation}
and it's norm is 
\begin{equation}
  ||x||_c^2=\displaystyle\sum_{i=1}^{m}x_i^2.
 \end{equation}
 
 We introduce the   Dirac operator,   Gamma operator and Laplace operator associated to a vector $x$ respectively by:
  \begin{equation}
  \partial_{x}=\displaystyle\sum_{i=1}^m e_i \partial_{x_i};
  \end{equation}
   
 \begin{equation}
\qquad\qquad\quad\qquad\Gamma_x=-\displaystyle\sum_{j<k}e_je_k(x_j\partial_{x_k}-x_k\partial_{x_j});
  \end{equation}
 \begin{equation}
   \Delta_c=\displaystyle\sum_{i=1}^m \partial_i^2.\quad
\end{equation}
    In the Clifford algebra, a vector x  and the Dirac operator satisfies :
 \begin{equation}
   ||x||_c^2=-x^2
 \end{equation}
 and
 \begin{equation}
\Delta_c=-\partial_x^2.
 \end{equation}
The inner product and the wedge product of two vectors $ x$ and $y$ are given respectively by:
  \begin{equation}
 \quad <x,y> :=\displaystyle\sum_{j=1}^m x_jy_j=\frac{-1}{2}(xy+yx);
  \end{equation}
   \begin{equation}
\qquad \qquad \qquad\quad  x\wedge y:=\displaystyle\sum_{j<k}e_je_k(x_j y_k-x_ky_j)=\frac{1}{2}(xy-yx).
 \end{equation}

 \noindent Every function $f: \mathbb{R}^m \rightarrow Cl_{0,m}$ can be written as :
  \begin{equation}
 f(x)=f_0(x)+\displaystyle\sum_{i=1}^me_if_i(x)+\displaystyle\sum_{i<j}e_ie_jf_{ij}(x)+..+e_1..e_mf_{1..m}(x),
  \end{equation}
 where  $f_0,f_i,..,f_{1..m}$ all real-valued functions.\\
 We denote by :\\ 
 $\bullet \mathcal{P}_k$ the space of homogeneous polynomials of degree $k$ taking values in $Cl_{0,m},$\\
 $\bullet \mathcal{P}$ the space of polynomials taking values in $Cl_{0,m}$, i.e\\
$$ P := \mathbb{R}[x_1, . . . , x_m] \otimes Cl_{0,m}.$$
$\bullet \mathcal{M}_k:=Ker \partial_{x}\cap \mathcal{P}_k$ the space of spherical monogenics of degree $k,$\\
$\bullet B(\mathbb{R}^m)\otimes Cl_{0,m}$ a class of integrable functions taking values in $Cl_{0,m}$ and satisfying
 \begin{equation}
||f||_B:=\int_{\mathbb{R}^m}(1+{||y||}_c)^\frac{m-2}{2}||f(y)||_cdy< \infty,
 \end{equation}
$\bullet L^p(\mathbb{R}^m)\otimes Cl_{0,m}$ the space of integrable functions  taking values in $Cl_{0,m}$ such that 
 \begin{equation}
||f||_{p,c}=\left(\displaystyle\int_{\mathbb{R}^m}{||f(x)||}_c^pdx\right)^{\frac{1}{p}}
=\left(\displaystyle\int_{\mathbb{R}^m}\left(\displaystyle\sum_{A\in J}(f_A(x))^2\right)^\frac{p}{2}dx\right)^{\frac{1}{p}}<\infty,
 \end{equation}
where $J=\{0,1,..,m,12,13,23..,12..m\},$\\
$\bullet \mathcal{S}(\mathbb{R}^{m})$ the Schwartz space of infinitely differentiable functions on $\mathbb{R}^{m}$ which are rapidly decreasing as their derivatives.

 \section{Clifford-Fourier Transform}
 \begin{defin} \cite{key-8} The Clifford-Fourier transform is defined on  $B(\mathbb{R}^{m})\otimes Cl_{0,m}$  by  
  \begin{equation}
 \mathcal{F}_{\pm}(f)(y)={(2\pi)}^\frac{-m}{2}\int_{\mathbb{R}^m} K_{\pm}(x,y)f(x)dx,
  \end{equation}
 where 
  \begin{equation}
 K_{\pm}(x,y)=e^{\mp i\frac{\pi}{2}\Gamma_y}e^{-i<x,y>}.
  \end{equation}
 \end{defin}
 
 \begin{lemme} \cite{key-9} Let  $m$ be even.  Then 
  \begin{equation}
{||K_{\pm}(x,y)||}_c\leq C e^{{||x||}_c{||y||}_c}, \qquad \forall x,y\in \mathbb{R}^m,
 \end{equation}
\end{lemme}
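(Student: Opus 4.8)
The plan is to reduce the estimate to the explicit closed form of the kernel $K_{\pm}(x,y)$ that is available precisely because $m$ is even, and then to bound the Clifford norm of that closed form. First I would recall how $K_{\pm}$ is produced. Expanding the plane wave $e^{-i\langle x,y\rangle}$ in its Gegenbauer--Bessel (Funk--Hecke) series decomposes it, at each fixed radius, into components lying in the spaces of spherical monogenics $\mathcal{M}_k$ and their companions $y\,\mathcal{M}_{k-1}$. The role of the Gamma operator is that it acts as multiplication by an integer scalar on each such component, so $e^{\mp i\frac{\pi}{2}\Gamma_y}$ multiplies every summand by a phase $e^{\mp i\frac{\pi}{2}n}$ of modulus $1$, which is $4$-periodic in the monogenic degree. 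This periodicity is exactly what makes the even case tractable: for even $m$ the Bessel orders $\tfrac{m-2}{2}+k$ are integers, the phase-weighted series groups into finitely many closed sums, and one obtains a finite closed form
\[
K_{\pm}(x,y)=\sum_{\text{finite}}\big(Cl_{0,m}\text{-valued polynomial in }x,y\big)\,g_{\nu}(\|x\|_c\|y\|_c),
\]
where each $g_{\nu}$ is a Bessel function of the single scalar variable $t=\|x\|_c\|y\|_c$. (For odd $m$ no such finite resummation occurs, which is why the hypothesis $m$ even is imposed.)

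With the closed form in hand the estimate is routine. Taking Clifford norms and applying the triangle inequality, I would bound the polynomial coefficients --- $\|x\wedge y\|_c$ and the scalar factors --- by powers of $\|x\|_c\|y\|_c=t$, and bound each $g_{\nu}$ using the uniform estimate $|J_{\nu}(t)|\le C_{\nu}$ (indeed $|J_{\nu}(t)|\lesssim (1+t)^{-1/2}$) for real argument. Collecting terms yields a polynomial bound $\|K_{\pm}(x,y)\|_c\le C(1+t)^{(m-2)/2}$, from which the claimed inequality follows a fortiori, since every polynomial in $t$ is dominated by $C\,e^{t}$; a single constant $C$ (covering also the compact regime $t\le 1$ by continuity) then gives $\|K_{\pm}(x,y)\|_c\le C\,e^{\|x\|_c\|y\|_c}$.

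The main obstacle is the first step: producing and justifying the finite closed form, i.e. verifying that for even $m$ the phase-weighted monogenic series really does collapse to finitely many Bessel terms and identifying their exact orders, since the evenness of $m$ enters here in an essential way. Once that closed form and the orders of the $g_{\nu}$ are pinned down, the norm estimates are elementary; I would organize the bookkeeping by treating the scalar part and the bivector ($x\wedge y$) part separately, as these carry the two distinct families of $\Gamma_y$-eigenvalues. The exponential bound is deliberately far from sharp --- the kernel is in fact only polynomially large --- but it is exactly the form needed to control the weight $e^{|x||y|}$ appearing in Beurling's integral.
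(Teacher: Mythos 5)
The paper does not actually prove this lemma: it is imported verbatim from the authors' earlier work \cite{key-9} (the only kernel information proved or recalled in the present paper is the component decomposition $K_-=K_0^-+\sum_{i<j}e_{ij}K_{ij}^-$ with the bounds $|K_0^-|,|K_{ij}^-|\leq C(1+\|x\|_c)^{\frac{m-2}{2}}(1+\|y\|_c)^{\frac{m-2}{2}}$ quoted inside the proof of Theorem 3.2). So there is no internal proof to compare against; measured against the argument in the cited source, your outline is essentially the right one: for even $m$ the $\Gamma_y$-eigenvalue phases let the Gegenbauer--monogenic expansion of $e^{-i\langle x,y\rangle}$ resum into a finite closed form with scalar and $x\wedge y$ parts whose coefficients are Bessel functions of integer or half-integer order in the variable $t=\|x\|_c\|y\|_c$, and the estimate then follows from $\|x\wedge y\|_c\leq\|x\|_c\|y\|_c$, $|\langle x,y\rangle|\leq\|x\|_c\|y\|_c$ and boundedness of $\tilde J_\nu$. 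One point worth making explicit, and which you implicitly get right: the final step $(1+t)^{(m-2)/2}\leq Ce^{t}$ requires the polynomial bound to be in the \emph{product} variable $t=\|x\|_c\|y\|_c$; the separated bound $(1+\|x\|_c)^{\frac{m-2}{2}}(1+\|y\|_c)^{\frac{m-2}{2}}$ that the paper quotes later does \emph{not} by itself imply $Ce^{\|x\|_c\|y\|_c}$ (take $\|y\|_c\to 0$ with $\|x\|_c$ large), so you must extract the product-form bound from the closed formula, as you propose. The remaining work in your plan is exactly where you locate it: writing down and justifying the finite resummation and the precise Bessel orders, which is the content of \cite{key-9} and of De Bie--Xu \cite{key-8}.
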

\begin{theorem} Let m be even  and $f\in B(\mathbb{R}^m)\otimes Cl_{0,m}$. Then, there exists a positive constant A such that
\begin{equation}
||\mathcal{F}_{\pm}(f)(y)||_c \leq C e^{\frac{||y||_c^2}{4}}||f||_B,\quad \forall ||y||_c>A
\end{equation}
\begin{equation}
 \qquad \quad||\mathcal{F}_{\pm}(f)(y)||_c \leq C (1+A)^{\frac{m-2}{2}}||f||_B, \quad\forall ||y||_c\leq A
\end{equation}
\end{theorem}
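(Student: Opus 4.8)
The plan is to estimate $\|\mathcal{F}_{\pm}(f)(y)\|_c$ directly from the integral definition and then split the range of $y$ into a ball $\{\|y\|_c\le A\}$ and its complement. First I would move the Clifford norm inside the integral by the integral triangle inequality, and use that $\|\cdot\|_c$ is submultiplicative up to a constant on $Cl_{0,m}$, i.e. $\|ab\|_c\le C\|a\|_c\|b\|_c$, to obtain
\begin{equation*}
\|\mathcal{F}_{\pm}(f)(y)\|_c\le C(2\pi)^{-m/2}\int_{\mathbb{R}^m}\|K_{\pm}(x,y)\|_c\,\|f(x)\|_c\,dx .
\end{equation*}
The decisive input is then a growth estimate for the kernel. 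I would not use the exponential bound $\|K_{\pm}(x,y)\|_c\le Ce^{\|x\|_c\|y\|_c}$ as such: inserting it and splitting $\|x\|_c\|y\|_c\le \|x\|_c^2+\tfrac14\|y\|_c^2$ would leave the factor $\int_{\mathbb{R}^m}e^{\|x\|_c^2}\|f(x)\|_c\,dx$, which is not controlled by $\|f\|_B$. What is needed is the sharper even-dimensional estimate $\|K_{\pm}(x,y)\|_c\le C(1+\|x\|_c\|y\|_c)^{(m-2)/2}$, whose polynomial weight is tuned precisely to the weight defining $B(\mathbb{R}^m)\otimes Cl_{0,m}$.

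Granting this estimate, I would factor it through $1+\|x\|_c\|y\|_c\le(1+\|x\|_c)(1+\|y\|_c)$ to reach the single inequality
\begin{equation*}
\|\mathcal{F}_{\pm}(f)(y)\|_c\le C(1+\|y\|_c)^{(m-2)/2}\int_{\mathbb{R}^m}(1+\|x\|_c)^{(m-2)/2}\|f(x)\|_c\,dx=C(1+\|y\|_c)^{(m-2)/2}\|f\|_B .
\end{equation*}
This one bound yields both claims, and it also shows in passing that the defining integral converges for each $y$. For $\|y\|_c\le A$ I bound $(1+\|y\|_c)^{(m-2)/2}\le(1+A)^{(m-2)/2}$, which is the second estimate. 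For $\|y\|_c>A$ I compare the polynomial weight with the Gaussian: since $(1+t)^{(m-2)/2}e^{-t^2/4}\to0$ as $t\to\infty$, one may fix $A>0$ so that $(1+\|y\|_c)^{(m-2)/2}\le e^{\|y\|_c^2/4}$ for all $\|y\|_c>A$, and this turns the same bound into the first estimate.

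The submultiplicativity step and the elementary polynomial-versus-Gaussian comparison fixing $A$ are routine. The main obstacle — indeed the only place where real work sits — is securing the polynomial kernel estimate for even $m$ and checking that its exponent $(m-2)/2$ matches the weight in $\|\cdot\|_B$; once this is in hand, the theorem follows by the bookkeeping above.
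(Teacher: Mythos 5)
Your proposal is correct and follows essentially the same route as the paper: both rest on the polynomial kernel estimate for even $m$ (the paper invokes the componentwise bounds $|K_{0}^-|,|K_{ij}^-|\le C(1+\|x\|_c)^{\frac{m-2}{2}}(1+\|y\|_c)^{\frac{m-2}{2}}$ from the literature, which is the factored form of the bound you ask for), yielding $\|\mathcal{F}_{\pm}(f)(y)\|_c\le C(1+\|y\|_c)^{\frac{m-2}{2}}\|f\|_B$, and then fix $A$ by the same observation that $(1+t)^{\frac{m-2}{2}}e^{-t^2/4}\to 0$. You correctly identify, as the paper implicitly does, that the exponential bound of Lemma 3.1 would not suffice here.
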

\begin{proof} Let $k(y)=(1+||y||_c)^{\frac{m-2}{2}}e^{\frac{-||y||_c^2}{4}},\quad \forall y\in \mathbb{R}^m$.\\
\noindent Since $\lim_{||y||_c\rightarrow \infty}k(y)=0$, there exists $A>0$ such that  for all $||y||_c>A$
$$||k(y)||_c\leq 1.$$
Thus, it follows that 
$$(1+||y||_c)^{\frac{m-2}{2}}\leq e^{\frac{||y||_c^2}{4}}, \qquad \forall ||y||_c>A.$$
Recall that the Clifford kernel \cite{key-8}  is written as :
$$K_-(x,y)=K_0^-(x,y)+\displaystyle\sum_{i<j} e_{ij}K_{ij}^-(x,y),$$
where $K_{0}^-(x,y)$ and $K_{ij}^-(x,y)$ satisfyies
$$|K_0^-(x,y)|\leq C (1+||x||_c)^{\frac{m-2}{2}}(1+||y||_c)^{\frac{m-2}{2}}$$
$$|K_{ij}^-(x,y)|\leq C (1+||x||_c)^{\frac{m-2}{2}}(1+||y||_c)^{\frac{m-2}{2}}.$$
We conclude.
\end{proof}
\begin{theorem}\cite{key-7}\\
\noindent  1) The Clifford-Fourier transform is a continuous operator from $\mathcal{S}(\mathbb{R}^m)\otimes Cl_{0,m}$ to $\mathcal{S}(\mathbb{R}^m)\otimes Cl_{0,m}$.\\
In particular,  when m  even,  we have 
$$\mathcal{F}_+\mathcal{F}_+=id_{\mathcal{S}(\mathbb{R}^m)\otimes Cl_{0,m}}.$$
2) The Clifford-Fourier transform extends from $\mathcal{S}(\mathbb{R}^m)\otimes Cl_{0,m}$ to a continuous map on $L^2(\mathbb{R}^m)\otimes Cl_{0,m}$.\\ In particular, when $m$  even, we have 
$$||\mathcal{F}_{\pm}(f)||_{2,c}=||f||_{2,c},$$
$\text{for all}\quad f\in L^2(\mathbb{R}^m)\otimes Cl_{0,m}$.
\end{theorem}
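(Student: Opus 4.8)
The plan is to reduce the Clifford-Fourier transform to the classical componentwise Fourier transform by isolating the angular operator. Since $\Gamma_y$ differentiates only in the $y$-variable while the defining integral runs over $x$, I would first pull $e^{\mp i\frac{\pi}{2}\Gamma_y}$ outside the integral and write
$$\mathcal{F}_{\pm}(f)(y)=e^{\mp i\frac{\pi}{2}\Gamma_y}\,\mathcal{F}_0(f)(y),$$
where $\mathcal{F}_0(f)(y)={(2\pi)}^{\frac{-m}{2}}\int_{\mathbb{R}^m}e^{-i<x,y>}f(x)\,dx$ is the ordinary vector-valued Fourier transform applied to each real component $f_A$. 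The whole statement then splits into two independent facts: the classical behaviour of $\mathcal{F}_0$, and the mapping properties of the exponential angular operator $e^{\mp i\frac{\pi}{2}\Gamma_y}$.

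For $\mathcal{F}_0$ I would invoke the scalar theory componentwise: $\mathcal{F}_0$ maps $\mathcal{S}(\mathbb{R}^m)$ continuously into itself and, by Plancherel applied to each $f_A$ together with the identity $||f||_{2,c}^2=\int_{\mathbb{R}^m}\sum_{A\in J} f_A^2\,dx$, it is an isometry of $L^2(\mathbb{R}^m)\otimes Cl_{0,m}$. The heart of the argument is therefore the analysis of $e^{\mp i\frac{\pi}{2}\Gamma_y}$. Here I would use that $\Gamma_y$ is purely angular (it commutes with the radial variable) and is diagonalized by the spherical monogenic decomposition: on $\mathcal{M}_k$ one has $\Gamma_y=-k$, while on $y\,\mathcal{M}_k$ one has $\Gamma_y=k+m-1$. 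Consequently $e^{\mp i\frac{\pi}{2}\Gamma_y}$ acts as multiplication by the unimodular phases $e^{\pm i\frac{\pi}{2}k}$ and $e^{\mp i\frac{\pi}{2}(k+m-1)}$ on mutually orthogonal subspaces; it therefore preserves radial smoothness and decay (hence the Schwartz class) and is a unitary operator on $L^2$. Combining this with the properties of $\mathcal{F}_0$ yields both the Schwartz-continuity in part 1) and the Plancherel identity $||\mathcal{F}_{\pm}(f)||_{2,c}=||f||_{2,c}$ in part 2), the $L^2$-extension following by density of $\mathcal{S}(\mathbb{R}^m)\otimes Cl_{0,m}$.

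For the involution $\mathcal{F}_+\mathcal{F}_+=\mathrm{id}$ when $m$ is even, I would exploit that the classical Fourier transform commutes with the angular momentum operators $y_j\partial_{y_k}-y_k\partial_{y_j}$, hence with $\Gamma_y$, so that
$$\mathcal{F}_+\mathcal{F}_+=e^{-i\frac{\pi}{2}\Gamma_y}\mathcal{F}_0\,e^{-i\frac{\pi}{2}\Gamma_y}\mathcal{F}_0=e^{-i\pi\Gamma_y}\,\mathcal{F}_0^{\,2}=e^{-i\pi\Gamma_y}\,P,$$
where $Pf(x)=f(-x)$ is the parity operator arising from $\mathcal{F}_0^{\,2}$. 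Evaluating on each eigenspace: on $\mathcal{M}_k$ the factor $e^{-i\pi\Gamma_y}$ contributes $(-1)^k$ and $P$ contributes $(-1)^k$, giving $1$; on $y\,\mathcal{M}_k$ the factor $e^{-i\pi\Gamma_y}$ contributes $(-1)^{k+m-1}$ and $P$ contributes $(-1)^{k+1}$, giving $(-1)^m$, which equals $1$ precisely because $m$ is even. As these eigenspaces span the whole space, $\mathcal{F}_+\mathcal{F}_+=\mathrm{id}$.

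The main obstacle I anticipate is making the exponential $e^{\mp i\frac{\pi}{2}\Gamma_y}$ rigorous: it is a formal power series in a first-order differential operator, so one must justify its convergence and its commutation with the $x$-integral, as well as the legitimacy of the spherical-monogenic diagonalization rather than treating it merely formally. This is exactly where the kernel estimates already recorded do the work: the bound ${||K_{\pm}(x,y)||}_c\leq Ce^{{||x||}_c{||y||}_c}$ and the componentwise decomposition of $K_-(x,y)$ guarantee that the series representation of the kernel is well-defined and term-by-term integrable, so that the operational identity $\mathcal{F}_{\pm}=e^{\mp i\frac{\pi}{2}\Gamma_y}\circ\mathcal{F}_0$ and all the steps above are justified.
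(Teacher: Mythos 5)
This theorem is imported from \cite{key-7} without proof, so there is no internal argument to compare against; your sketch reconstructs the standard route of that reference, and its skeleton is sound. The factorization $\mathcal{F}_{\pm}=e^{\mp i\frac{\pi}{2}\Gamma_y}\circ\mathcal{F}_0$, the eigenvalues $-k$ on $\mathcal{M}_k$ and $k+m-1$ on $y\,\mathcal{M}_k$ (consistent with the sign convention in (2.9)), the commutation of $\mathcal{F}_0$ with $\Gamma_y$, and the parity bookkeeping giving $(-1)^{2k+m}=(-1)^m=1$ for $m$ even are all correct; this is exactly how $\mathcal{F}_+\mathcal{F}_+=\mathrm{id}$ and the $L^2$ isometry are obtained in the literature (via the orthogonal Fischer decomposition $\mathcal{H}_k=\mathcal{M}_k\oplus y\,\mathcal{M}_{k-1}$ and unimodular multipliers).

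There are, however, two genuine gaps. First, the closing paragraph rests the rigor of the whole construction on the bound ${\|K_{\pm}(x,y)\|}_c\leq Ce^{\|x\|_c\|y\|_c}$, and that estimate cannot do this work: Schwartz functions decay faster than any polynomial but need not decay exponentially, so $\int_{\mathbb{R}^m}e^{\|x\|_c\|y\|_c}\|f(x)\|_c\,dx$ can diverge for $f\in\mathcal{S}(\mathbb{R}^m)\otimes Cl_{0,m}$ and $y\neq 0$. The estimate that actually justifies the definition on $\mathcal{S}$, the interchange with the $x$-integral, and the continuity $\mathcal{S}\to\mathcal{S}$ is the \emph{polynomial} bound $|K^-_0(x,y)|,|K^-_{ij}(x,y)|\leq C(1+\|x\|_c)^{\frac{m-2}{2}}(1+\|y\|_c)^{\frac{m-2}{2}}$ (together with analogous bounds on all derivatives of the closed-form kernel), and this is precisely where the hypothesis that $m$ is even enters part 1): for $m$ odd the kernel is not polynomially bounded and $\mathcal{F}_{\pm}$ does not preserve $\mathcal{S}$. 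Second, your justification that $e^{\mp i\frac{\pi}{2}\Gamma_y}$ ``preserves the Schwartz class'' because it is a phase on each eigenspace is too quick: it multiplies the degree-$k$ spherical-monogenic component by a $k$-dependent phase, so you must show that the spherical-harmonic expansion of a Schwartz function, with these phases inserted, still converges in the topology of $\mathcal{S}$ and that the resulting map is continuous — a statement that is not automatic and is exactly what the kernel estimates are for. Relatedly, your Plancherel argument as written never uses that $m$ is even, which should signal that the spectral picture alone does not capture where the parity hypothesis is consumed.
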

\begin{theorem} \cite{key-9} Let $a>0$ and $P\in \mathcal{P}_k(\mathbb{R}^m)$. Then, there exists $Q\in \mathcal{P}_k(\mathbb{R}^m)$  satisfying :
\begin{equation}
\mathcal{F}_{\pm}(P(.)e^{-a||.||_c^2})(x)=Q(x)e^{-\frac{||x||_c^2}{4a}}.
\end{equation}

\end{theorem}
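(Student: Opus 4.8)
The plan is to factor the Clifford--Fourier transform through the ordinary Fourier transform and then track the purely angular operator $e^{\mp i\frac{\pi}{2}\Gamma_y}$. The starting point is the factorization of the kernel $K_{\pm}(x,y)=e^{\mp i\frac{\pi}{2}\Gamma_y}e^{-i\langle x,y\rangle}$. Since $\Gamma_y$ differentiates only in the variable $y$ and the function $P(\cdot)e^{-a||\cdot||_c^2}$ lies in $\mathcal{S}(\mathbb{R}^m)\otimes Cl_{0,m}$, on which $\mathcal{F}_{\pm}$ acts continuously, the operator $e^{\mp i\frac{\pi}{2}\Gamma_y}$ may be pulled outside the integral. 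This yields $\mathcal{F}_{\pm}(f)(y)=e^{\mp i\frac{\pi}{2}\Gamma_y}\widehat{f}(y)$, where $\widehat{f}(y)=(2\pi)^{-m/2}\int_{\mathbb{R}^m}e^{-i\langle x,y\rangle}f(x)\,dx$ is the classical Fourier transform applied to each real component of $f$. Thus the problem splits into a scalar computation followed by an application of $e^{\mp i\frac{\pi}{2}\Gamma_y}$.

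Next I would compute the classical transform of $P(\cdot)e^{-a||\cdot||_c^2}$. Starting from $\widehat{e^{-a||\cdot||_c^2}}(y)=(2a)^{-m/2}e^{-||y||_c^2/(4a)}$ and the intertwining rule $\widehat{x_j f}(y)=i\,\partial_{y_j}\widehat{f}(y)$, repeated differentiation of the Gaussian, each $\partial_{y_j}$ producing a factor $-y_j/(2a)$ together with lower-order terms, gives $\widehat{P(\cdot)e^{-a||\cdot||_c^2}}(y)=R(y)\,e^{-||y||_c^2/(4a)}$ for some $Cl_{0,m}$-valued polynomial $R$ of degree $\le k$.

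Then I would transfer the angular operator onto $R$. The angular fields $y_j\partial_{y_k}-y_k\partial_{y_j}$ annihilate any radial function, so $\Gamma_y$ kills $e^{-||y||_c^2/(4a)}$; by the Leibniz rule $\Gamma_y(R\,g)=(\Gamma_y R)\,g$ for a scalar radial factor $g$, and iterating gives $e^{\mp i\frac{\pi}{2}\Gamma_y}\big(R(y)e^{-||y||_c^2/(4a)}\big)=\big(e^{\mp i\frac{\pi}{2}\Gamma_y}R\big)(y)\,e^{-||y||_c^2/(4a)}$. Because each summand of $\Gamma_y$ preserves the degree of homogeneity, $\Gamma_y$ leaves the finite-dimensional space of $Cl_{0,m}$-valued polynomials of degree $\le k$ invariant, so $e^{\mp i\frac{\pi}{2}\Gamma_y}$ acts there as a genuine matrix exponential and $Q:=e^{\mp i\frac{\pi}{2}\Gamma_y}R$ is again a polynomial of degree $\le k$, which already establishes the desired factorization $\mathcal{F}_{\pm}(P(\cdot)e^{-a||\cdot||_c^2})(x)=Q(x)e^{-||x||_c^2/(4a)}$.

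The main obstacle is the homogeneity bookkeeping: the scalar step above produces genuine lower-order terms, so to land $Q$ in $\mathcal{P}_k$ rather than merely in the polynomials of degree $\le k$, one must argue block by block through the monogenic (Fischer) decomposition $\mathcal{P}_k=\bigoplus_{j=0}^{k}x^{j}\mathcal{M}_{k-j}$. On a solid monogenic $M_{k-j}\in\mathcal{M}_{k-j}$ the Hecke--Bochner identity already returns a homogeneous image of the same degree, and $\Gamma_y$ preserves that degree, so no spurious lower-order terms survive once the factor $x^{j}$ is reinstated. Establishing this sharp monogenic behaviour, equivalently the explicit eigenvalue action of $e^{\mp i\frac{\pi}{2}\Gamma_y}$ on each $\mathcal{M}_{k-j}$, is the delicate point and is exactly what is supplied by the reference.
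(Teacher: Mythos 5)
The paper does not actually prove this statement: it is imported verbatim from \cite{key-9} with no argument given, so there is no internal proof to compare yours against. Judged on its own terms, your reconstruction is sound and is essentially the standard route (and the one taken in the cited source): write $\mathcal{F}_{\pm}(f)(y)=e^{\mp i\frac{\pi}{2}\Gamma_y}\widehat{f}(y)$ for Schwartz $f$, use the classical fact that $\widehat{P(\cdot)e^{-a\|\cdot\|_c^2}}=R(\cdot)e^{-\|\cdot\|_c^2/(4a)}$ with $\deg R\leq k$, and then observe that each field $y_j\partial_{y_k}-y_k\partial_{y_j}$ kills radial functions and preserves the degree of polynomials, so that $e^{\mp i\frac{\pi}{2}\Gamma_y}$ is a legitimate matrix exponential on the finite-dimensional space of $Cl_{0,m}$-valued polynomials of degree $\leq k$ and passes through the Gaussian factor. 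All three steps are correct as you state them.

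The one point to be careful about is the ``homogeneity bookkeeping'' you raise at the end. With $\mathcal{P}_k$ defined, as in Section 2 of this paper, as the space of \emph{homogeneous} polynomials of degree $k$, the conclusion $Q\in\mathcal{P}_k$ is simply false: already for $P(x)=x_1^2$ the classical Fourier transform produces $\bigl(c_1 y_1^2+c_2\bigr)e^{-\|y\|_c^2/(4a)}$ with $c_2\neq 0$, and $e^{\mp i\frac{\pi}{2}\Gamma_y}$ preserves each homogeneous component separately, so the constant term survives. No passage through the Fischer decomposition $\mathcal{P}_k=\oplus_j x^j\mathcal{M}_{k-j}$ or the Hecke--Bochner identity can remove it; those tools give homogeneity only for a single solid monogenic block, not for a general $P\in\mathcal{P}_k$. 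So you should not try to upgrade your conclusion: what your argument proves --- $Q$ a $Cl_{0,m}$-valued polynomial of degree at most $k$ --- is the correct statement, it is what the analogous Euclidean and Dunkl results assert, and it is all that is used later in the paper (in Step 1 of Theorem 4.3 only the case $k=0$, i.e.\ $\mathcal{F}_{\pm}(e^{-\|\cdot\|_c^2/2})=e^{-\|\cdot\|_c^2/2}$, is needed, and in the final conclusion only a degree bound on $Q$ matters). The defect lies in the statement as transcribed, not in your proof.
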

\begin{defin}\cite{key-8} 
Let m be even. The Clifford translation  and the Clifford convolution for $f, g \in \mathcal{S}(\mathbb{R}^{m})$ are introduced respectively by 
\begin{equation}
\quad T_yf(x)={(2\pi)}^{-\frac{m}{2}}\int_{\mathbb{R}^m}\overline{K_-(\epsilon,x)}K_-(y,\epsilon)\mathcal{F}(f)(\epsilon)d\epsilon,
\end{equation}
\begin{equation}
f\ast_{Cl} g(x)={(2\pi)}^{-\frac{m}{2}}\int_{\mathbb{R}^m}T_yf(x) g(y) dy.\qquad\qquad
\end{equation}

\end{defin}
\begin{theorem}\cite{key-8} Let $f \in \mathcal{S}(\mathbb{R}^m)\otimes Cl_{0,m}$\\
i) For $m=2$,
$$T_yf(x)=f(x-y)$$
 ii) For m even and $m>2$, we have $$T_yf(x) = f_0(|x - y|),$$
  for radial function $f$ on $\mathbb{R}^m$, $f(x) = f_0(|x|)$ with
$f_0 : \mathbb{R}_+ \rightarrow \mathbb{R}$.

\end{theorem}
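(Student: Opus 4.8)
The plan is to read the definition of $T_y$ as a Clifford-Fourier multiplier and to reduce, in each case, to the classical Euclidean translation. Since $m$ is even we have $\mathcal{F}_+\mathcal{F}_+=\mathrm{id}$, so the conjugate kernel $\overline{K_-(\epsilon,x)}$ appearing in the definition plays the role of the inverse-transform kernel, and $\mathcal{F}$ there is $\mathcal{F}_-$; the genuine work is to make the kernel $K_\pm(x,y)=e^{\mp i\frac{\pi}{2}\Gamma_y}e^{-i\langle x,y\rangle}$ explicit, i.e. to evaluate the exponential of the angular operator $\Gamma_y$ on a plane wave. I would begin by exploiting that, in polar-type coordinates, $\Gamma_y$ is the infinitesimal generator of rotations, so its exponential acts on the plane wave through a Jacobi--Anger / Bessel expansion.

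For part (i), with $m=2$ there is a single bivector $e_{12}$, $e_{12}^2=-1$, and $\Gamma_y=-e_{12}\partial_\theta$ in polar coordinates. Expanding $e^{-i\langle x,y\rangle}$ by Jacobi--Anger and applying $e^{\mp i\frac{\pi}{2}\Gamma_y}$ term by term, the $n$th harmonic acquires the factor $e_{12}^{\,n}$, and resumming the Bessel series collapses the kernel to the closed form $K_-(x,y)=\exp\!\big(e_{12}(x_1y_2-x_2y_1)\big)$ — the classical Fourier character once $e_{12}$ is identified with the imaginary unit. The decisive feature is that every such exponent is a scalar multiple of the single bivector $e_{12}$, so the two factors $\overline{K_-(\epsilon,x)}$ and $K_-(y,\epsilon)$ commute and their exponents simply add; with the conjugation and sign conventions of \cite{key-8} the product telescopes to the character evaluated at the Euclidean difference $x-y$. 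Substituting into the definition and invoking Fourier inversion then yields $T_yf(x)=f(x-y)$.

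For even $m>2$ the several bivectors $e_{ij}$ no longer commute, the exponents do not add, and the elementary resummation breaks down; this is exactly why one restricts to radial $f$. Here the key observation is that $\Gamma$ annihilates radial functions: since $f(x)=f_0(|x|)$ is scalar and radial, its ordinary Fourier transform $\widehat f$ is radial, $e^{i\frac{\pi}{2}\Gamma}\widehat f=\widehat f$, and hence $\mathcal{F}_-(f)=\widehat f$. It then remains to evaluate $(2\pi)^{-m/2}\int \overline{K_-(\epsilon,x)}K_-(y,\epsilon)\widehat f(\epsilon)\,d\epsilon$. Using the closed-form even-dimensional kernel of \cite{key-8}, whose scalar and bivector components $K_0^-,K_{ij}^-$ already appear in the growth estimate proved above, I would split the product kernel into its rotation-invariant (scalar) part and its higher components; integrating against the radial weight $\widehat f$, the angular integrations annihilate every non-invariant component and leave only the scalar character $e^{i\langle x-y,\epsilon\rangle}$. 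Classical Fourier inversion then gives $T_yf(x)=f(x-y)=f_0(|x-y|)$.

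The main obstacle in both parts is controlling the operator exponential $e^{\mp i\frac{\pi}{2}\Gamma}$: for $m=2$ this is handled by the explicit Bessel resummation, while for $m>2$ one must lean on the closed-form kernel of \cite{key-8} and on rotation invariance. The most delicate step is justifying that the non-scalar components of the product kernel integrate to zero against a radial function; the component bounds $|K_0^-|,|K_{ij}^-|\le C(1+\|x\|_c)^{\frac{m-2}{2}}(1+\|y\|_c)^{\frac{m-2}{2}}$, the estimate $\|K_\pm\|_c\le Ce^{\|x\|_c\|y\|_c}$, and the integrability built into $B(\mathbb{R}^m)\otimes Cl_{0,m}$ together guarantee that Fubini applies and that the interchange of summation and integration is legitimate.
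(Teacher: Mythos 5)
First, note that the paper does not prove this statement at all: it is quoted verbatim from \cite{key-8}, so there is no internal proof to compare against. Judged on its own merits, your sketch of part (i) is sound: in dimension $2$ the operator $\Gamma_y=-e_{12}\partial_{\phi}$ generates rotations in $y$, the exponential $e^{\mp i\frac{\pi}{2}\Gamma_y}$ can be diagonalized via the idempotents $\frac{1}{2}(1\mp ie_{12})$ (or, equivalently, resummed through Jacobi--Anger), the kernel collapses to $e^{e_{12}(x_2y_1-x_1y_2)}$, and since everything lives in the commutative subalgebra $\mathrm{span}\{1,e_{12}\}\cong\mathbb{C}$ the two kernel factors in the definition of $T_y$ multiply like characters and Fourier inversion finishes the argument (modulo the sign conventions you rightly flag). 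Your observation that $\mathcal{F}_\pm f=\widehat{f}$ for radial scalar $f$, because $\Gamma_y$ annihilates radial functions and $\widehat{f}$ is radial, is also correct and is the right first step for part (ii).

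The gap is in the remainder of part (ii). You assert that, after integrating $\overline{K_-(\epsilon,x)}K_-(y,\epsilon)$ against the radial weight $\widehat{f}(\epsilon)$, ``the angular integrations annihilate every non-invariant component and leave only the scalar character $e^{i\langle x-y,\epsilon\rangle}$.'' That sentence is the entire theorem, and rotation invariance alone does not deliver it. The scalar part of the product of two Clifford kernels is not $e^{i\langle x-y,\epsilon\rangle}$: writing $K_-=K_0^-+\sum_{i<j}e_{ij}K_{ij}^-$, the scalar component of $\overline{K_-(\epsilon,x)}K_-(y,\epsilon)$ is a combination of the form $\overline{K_0^-}K_0^--\sum_{i<j}\overline{K_{ij}^-}K_{ij}^-$, which depends jointly on $x$, $y$, $\epsilon$ and bears no pointwise resemblance to the classical character. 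What must actually be proved is that the spherical averages of this scalar part over $\{\|\epsilon\|_c=r\}$ coincide, for every $r$, with those of $e^{i\langle x-y,\epsilon\rangle}$ (and that the averages of the non-scalar parts vanish); in \cite{key-8} this is done by expanding $K_-$ in Gegenbauer polynomials and Bessel functions and invoking the Bessel product/addition formulas. Without that computation, or some substitute for it, your argument does not establish $T_yf(x)=f_0(|x-y|)$ for $m>2$. The Fubini justifications you give at the end are fine but address a secondary issue, not this one.
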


\begin{theorem}\cite{key-8} Let $f\in \mathcal{S}(\mathbb{R}^m)$ be radial function and $g\in \mathcal{S}(\mathbb{R}^m)\otimes Cl_{0,m}$. Then,
$$\mathcal{F}_{\pm}(f\ast_{Cl} g)=\mathcal{F}_{\pm}(f)\mathcal{F}_{\pm}(g).\quad$$
In particular, we have 
$$f\ast_{Cl} g=g\ast_{Cl} f$$
\end{theorem}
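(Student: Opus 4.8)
The plan is to reduce the whole statement to the ordinary (Clifford-valued) Fourier transform, using that the radial hypothesis on $f$ kills the Gamma operator. Write $\widehat{g}(y)=(2\pi)^{-\frac m2}\int_{\mathbb{R}^m}e^{-i<x,y>}g(x)\,dx$ for the classical transform applied componentwise. Because the operator $e^{\mp i\frac{\pi}{2}\Gamma_y}$ acts only on the variable $y$, and hence commutes with the $x$-integration and with right multiplication by $f(x)$, the definition of the kernel $K_{\pm}$ gives at once the factorisation $\mathcal{F}_{\pm}(g)(y)=e^{\mp i\frac{\pi}{2}\Gamma_y}\,\widehat{g}(y)$. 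First I would record two elementary consequences of $f$ being radial: its classical transform $\widehat f$ is again radial and scalar-valued, and by the Theorem describing $T_yf$ the Clifford translate of the radial $f$ coincides with the ordinary translate, so that $f\ast_{Cl}g(x)=(2\pi)^{-\frac m2}\int_{\mathbb{R}^m}f(x-y)g(y)\,dy$ is just the classically normalised convolution.

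With this in hand the product formula is a short computation. The classical convolution theorem (with the $(2\pi)^{-\frac m2}$ normalisation) gives $\widehat{f\ast_{Cl}g}=\widehat f\,\widehat g$. Applying $e^{\mp i\frac{\pi}{2}\Gamma_y}$ to both sides and using $\mathcal{F}_{\pm}(g)=e^{\mp i\frac{\pi}{2}\Gamma_y}\widehat g$, I would then need to pull the operator through the left factor $\widehat f$, i.e. to know that $e^{\mp i\frac{\pi}{2}\Gamma_y}(\widehat f\,\widehat g)=\widehat f\,e^{\mp i\frac{\pi}{2}\Gamma_y}\widehat g$. Since $\Gamma_y$ annihilates the radial $\widehat f$ we moreover have $\mathcal{F}_{\pm}(f)=e^{\mp i\frac{\pi}{2}\Gamma_y}\widehat f=\widehat f$, and combining the two displays yields exactly $\mathcal{F}_{\pm}(f\ast_{Cl}g)=\mathcal{F}_{\pm}(f)\,\mathcal{F}_{\pm}(g)$.

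The crux, and the only non-formal point, is the commutation $\Gamma_y(\widehat f\,u)=\widehat f\,\Gamma_y u$ for arbitrary Clifford-valued $u$. It holds because $\widehat f$ is a scalar radial function: each angular derivative $y_j\partial_{y_k}-y_k\partial_{y_j}$ kills $\widehat f$, so in the Leibniz rule only the term hitting $u$ survives, and as the prefactors $e_je_k$ multiply on the left while the scalar $\widehat f$ is central, $\widehat f$ factors out of the whole sum defining $\Gamma_y$. Iterating gives $\Gamma_y^n(\widehat f\,u)=\widehat f\,\Gamma_y^n u$, and summing the exponential series yields the claimed identity. I expect the careful justification of this interchange — in particular checking that $\widehat g\in\mathcal{S}(\mathbb{R}^m)\otimes Cl_{0,m}$ so that $e^{\mp i\frac{\pi}{2}\Gamma_y}$ is genuinely defined on it and the series manipulations are legitimate — to be the main technical obstacle, everything else being the classical convolution theorem dressed in Clifford notation.

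For the commutativity ``$f\ast_{Cl}g=g\ast_{Cl}f$'' I would argue by injectivity of the transform, which is available since $\mathcal{F}_+\mathcal{F}_+=\mathrm{id}$ when $m$ is even. As $\mathcal{F}_{\pm}(f)=\widehat f$ is scalar, it is central in $Cl_{0,m}$, whence $\mathcal{F}_{\pm}(f)\,\mathcal{F}_{\pm}(g)=\mathcal{F}_{\pm}(g)\,\mathcal{F}_{\pm}(f)$; so it suffices to produce the companion formula $\mathcal{F}_{\pm}(g\ast_{Cl}f)=\mathcal{F}_{\pm}(g)\,\mathcal{F}_{\pm}(f)$ and then conclude $f\ast_{Cl}g=g\ast_{Cl}f$ from injectivity. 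I should flag that this companion formula is \emph{not} an immediate consequence of the above, because in $g\ast_{Cl}f$ it is the non-radial $g$ that is translated, so $T_yg$ can no longer be replaced by the ordinary translate; establishing it will require exploiting the symmetry of the defining kernel integral for $T_y$ against the radial $f$, and this is the genuinely delicate part of the commutativity claim rather than the product formula itself.
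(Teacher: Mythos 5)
This is a statement the paper imports verbatim from \cite{key-8}: no proof is given in the text, so there is nothing internal to compare your argument against, and I can only assess the proposal on its own terms.

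Your main line of attack for the product formula is sound and is essentially the standard one. The factorisation $\mathcal{F}_{\pm}(g)(y)=e^{\mp i\frac{\pi}{2}\Gamma_y}\widehat{g}(y)$ is correct (the operator acts in $y$ and the Clifford constant $g(x)$ sits on the right, so it passes through the integral); Theorem 3.5 does reduce $f\ast_{Cl}g$ to the classically normalised convolution when $f$ is radial; and the commutation $\Gamma_y(\widehat f\,u)=\widehat f\,\Gamma_y u$ holds for exactly the reason you give (angular derivatives annihilate the radial $\widehat f$, and a scalar is central in $Cl_{0,m}$), hence persists for $e^{\mp i\frac{\pi}{2}\Gamma_y}$ whether one defines that operator by a power series or, as in \cite{key-7,key-8}, spectrally through the decomposition into spherical monogenics. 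You should, however, make explicit that $m$ is assumed even throughout: Definition 3.2 of $T_y$ and $\ast_{Cl}$, Theorem 3.5, and the inversion $\mathcal{F}_+\mathcal{F}_+=\mathrm{id}$ are all only available in that case.

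The one genuine gap is that you leave the companion identity $\mathcal{F}(g\ast_{Cl}f)=\mathcal{F}(g)\mathcal{F}(f)$ unproved, flagging it as the delicate point. It is in fact immediate from Definition 3.2 and closes your injectivity argument: since $T_yg$ is by construction the inverse transform of $\epsilon\mapsto K_-(y,\epsilon)\mathcal{F}(g)(\epsilon)$, one has $\mathcal{F}(T_yg)(\xi)=K_-(y,\xi)\mathcal{F}(g)(\xi)$, so $\mathcal{F}(g\ast_{Cl}f)(\xi)=(2\pi)^{-\frac m2}\int_{\mathbb{R}^m}K_-(y,\xi)\mathcal{F}(g)(\xi)f(y)\,dy$. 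The factor $f(y)$ is a real scalar and commutes with everything, so the integral equals $\bigl((2\pi)^{-\frac m2}\int_{\mathbb{R}^m}K_-(y,\xi)f(y)\,dy\bigr)\mathcal{F}(g)(\xi)=\mathcal{F}(f)(\xi)\mathcal{F}(g)(\xi)$, and since $\mathcal{F}(f)$ is scalar this is $\mathcal{F}(g)(\xi)\mathcal{F}(f)(\xi)$. Plancherel (Theorem 3.3) then yields $f\ast_{Cl}g=g\ast_{Cl}f$ exactly as you intended. Note that this same computation, applied with the roles reversed, also gives a shorter proof of the product formula itself, bypassing the classical convolution theorem entirely.
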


\section{Beurling's theorem for the Clifford-Fourier transform}
In this section, we provide Beurling's theorem for the Clifford-Fourier transform.
\begin{lemme}Assume $f\in L^2(\mathbb{R}^m)\otimes Cl_{0,m}$ satisfying  
\begin{equation} 
\displaystyle\int_{\mathbb{R}^m}\int_{\mathbb{R}^m}\frac{||f(x)||_c||\mathcal{F}_{\pm}(f)(y)||_c}{(1+||x||_c+||y||_c)^N}e^{||x||_c||y||_c}dxdy <\infty,
\end{equation}
for some $N\geq0$.\\
\noindent Then, $f\in B(\mathbb{R}^m)\otimes Cl_{0,m}$ and  $\mathcal{F}_{\pm}(f )\in B(\mathbb{R}^m)\otimes Cl_{0,m}$. 

\end{lemme}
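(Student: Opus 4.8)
The plan is to prove directly that both $\|f\|_B$ and $\|\mathcal{F}_\pm(f)\|_B$ are finite, where by definition $\|g\|_B=\int_{\mathbb{R}^m}(1+\|x\|_c)^{\frac{m-2}{2}}\|g(x)\|_c\,dx$. Since $m$ is even we have $\frac{m-2}{2}\geq 0$, so this weight is a genuine (sub-exponential) power of $\|x\|_c$; the whole point is that a weight of the form $e^{\rho\|x\|_c}/(1+\|x\|_c+\rho)^N$ with $\rho>0$ eventually dominates it. If $f=0$ the statement is trivial, so assume $f\neq 0$; then the Plancherel-type identity $\|\mathcal{F}_\pm(f)\|_{2,c}=\|f\|_{2,c}$ shows $\mathcal{F}_\pm(f)\neq 0$ as well, and in particular $\mathcal{F}_\pm(f)\in L^2(\mathbb{R}^m)\otimes Cl_{0,m}$.

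First I would extract a single good frequency. By Tonelli's theorem the function
$$h(y):=\int_{\mathbb{R}^m}\frac{\|f(x)\|_c}{(1+\|x\|_c+\|y\|_c)^N}\,e^{\|x\|_c\|y\|_c}\,dx$$
is measurable with values in $[0,\infty]$, and the hypothesis reads $\int_{\mathbb{R}^m}h(y)\,\|\mathcal{F}_\pm(f)(y)\|_c\,dy<\infty$. Hence $h(y)\,\|\mathcal{F}_\pm(f)(y)\|_c<\infty$ for a.e.\ $y$, so that $h(y)<\infty$ for a.e.\ $y$ in the positive-measure set $E:=\{y:\mathcal{F}_\pm(f)(y)\neq 0\}$. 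Discarding also the null set $\{\|y\|_c=0\}$, I can choose $y^\ast\in E$ with $\rho:=\|y^\ast\|_c>0$ and $h(y^\ast)<\infty$, i.e.
$$\int_{\mathbb{R}^m}\frac{\|f(x)\|_c}{(1+\|x\|_c+\rho)^N}\,e^{\rho\|x\|_c}\,dx<\infty.$$

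Next I would split the target integral at a large radius $R$. On $\{\|x\|_c\leq R\}$ the weight $(1+\|x\|_c)^{\frac{m-2}{2}}$ is bounded and the ball has finite measure, so Cauchy--Schwarz together with $f\in L^2(\mathbb{R}^m)\otimes Cl_{0,m}$ gives $\int_{\|x\|_c\leq R}(1+\|x\|_c)^{\frac{m-2}{2}}\|f(x)\|_c\,dx<\infty$. On $\{\|x\|_c> R\}$, since $\rho>0$ the elementary comparison $(1+t)^{\frac{m-2}{2}}(1+t+\rho)^N\leq e^{\rho t}$ holds for all $t$ large (the left side is a fixed power of $t$ while the right side grows exponentially at the positive rate $\rho$), so choosing $R$ accordingly I obtain $(1+\|x\|_c)^{\frac{m-2}{2}}\leq e^{\rho\|x\|_c}/(1+\|x\|_c+\rho)^N$ there. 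Consequently $\int_{\|x\|_c>R}(1+\|x\|_c)^{\frac{m-2}{2}}\|f(x)\|_c\,dx$ is bounded by the finite integral of the previous step. Adding the two pieces yields $\|f\|_B<\infty$, that is $f\in B(\mathbb{R}^m)\otimes Cl_{0,m}$.

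Finally, the hypothesis is symmetric in $f$ and $\mathcal{F}_\pm(f)$: running the identical argument with the roles of $x$ and $y$ interchanged — now selecting a point $x^\ast$ with $\|x^\ast\|_c>0$ from the positive-measure set $\{x:f(x)\neq 0\}$, and using $\mathcal{F}_\pm(f)\in L^2$ for the bounded-radius part — gives $\mathcal{F}_\pm(f)\in B(\mathbb{R}^m)\otimes Cl_{0,m}$. The only genuinely delicate point is the selection of $y^\ast$ (resp.\ $x^\ast$) with $\|y^\ast\|_c>0$: it is essential that $\rho$ be \emph{strictly} positive, since otherwise the exponential factor degenerates to $1$ and can no longer absorb the polynomial weight; this is exactly what the nonvanishing of $\mathcal{F}_\pm(f)$ (resp.\ $f$), guaranteed by the Plancherel identity, secures.
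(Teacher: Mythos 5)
Your argument is correct and follows essentially the same route as the paper: assume $f\neq 0$, use Plancherel to find a point $y_0$ with $\|y_0\|_c>0$ at which the inner integral in the hypothesis is finite, and then observe that $e^{\|y_0\|_c\|x\|_c}/(1+\|x\|_c+\|y_0\|_c)^N$ eventually dominates the weight $(1+\|x\|_c)^{\frac{m-2}{2}}$, with the symmetric argument giving $\mathcal{F}_\pm(f)\in B(\mathbb{R}^m)\otimes Cl_{0,m}$. Your treatment is in fact slightly more careful than the paper's, since you handle the bounded region $\{\|x\|_c\leq R\}$ explicitly via Cauchy--Schwarz and the $L^2$ hypothesis, a step the paper leaves implicit.
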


\begin{proof}
We may suppose $f\neq0$.\\
\noindent Applying Fubini's theorem, we obtain for almost every $y\in \mathbb{R}^m$,
$$||\mathcal{F}_{\pm}(f)(y)||_c\displaystyle\int_{\mathbb{R}^m}\frac{||f(x)||_c}{(1+||x||_c+||y||)^N} e^{||x||_c||y||_c} dx<\infty.$$ 
Since $f\neq 0$, then $\mathcal{F}_{\pm}(f)\neq 0$. Thus, there exists $y_0\neq 0$ such that $\mathcal{F}_{\pm}(f)(y_0)\neq 0$ and
\begin{equation}
\displaystyle\int_{\mathbb{R}^m}\frac{||f(x)||_c}{(1+||x||_c)^N}  e^{||x||_c||y_0||_c}dx <\infty.
\end{equation}
Using  (4.2) and  the fact that for large $x$

$$\frac{ e^{||x||_c||y_0||_c}}{(1+||x||_c)^{N+\frac{m-2}{2}}}\geq 1,$$
 it follows   that 
 $$\displaystyle\int_{\mathbb{R}^m}(1+||x||_c)^{\frac{m-2}{2}}||f(x)||_c dx<\infty.$$
Similarly, we get $\mathcal{F}_{\pm}(f)\in B(\mathbb{R}^m)\otimes Cl_{0,m}$.

\end{proof}
\begin{theorem}\cite{key-11}
Let $\phi$ be an entire function of order $2$ in the complex plane and let $\alpha \in ]0,\displaystyle\frac{\pi}{2}[$.
Assume that $|\phi(z)|$ is bounded by $C(1 + |z|)^N$ on the boundary of some
angular sector $\{re^{i\beta} : r \geq 0, \beta_0\leq \beta \leq\beta_0+\alpha\}$. Then the same bound is
valid inside the angular sector (when replacing $C$ by $2^NC$).
\end{theorem}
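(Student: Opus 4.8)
The plan is to recognise this as the classical Phragmén--Lindelöf principle for an angular sector, with the only twist being that the majorant is a polynomial rather than a constant; I would therefore first strip the polynomial off, and then run the Phragmén--Lindelöf comparison. First I would normalise the geometry: replacing $\phi(z)$ by $\phi\!\left(e^{i(\beta_0+\alpha/2)}z\right)$, which is again entire of order $2$ and whose modulus is unaffected by the rotation appearing in the factor $(1+|z|)^N$, I may assume that the sector is symmetric about the positive real axis, namely $S=\{re^{i\beta}:r\ge 0,\ |\beta|\le \alpha/2\}$. On $S$ one has $\mathrm{Re}(1+z)=1+r\cos\beta\ge 1>0$, so $(1+z)^N$ is holomorphic and non-vanishing on $S$ for the principal branch, and the elementary inequality $|1+z|\ge \tfrac12(1+|z|)$ (valid whenever $\mathrm{Re}\,z\ge 0$) together with $|1+z|\le 1+|z|$ gives
\[
\tfrac{1}{2^N}(1+|z|)^N\le |1+z|^N\le (1+|z|)^N,\qquad z\in S.
\]
Setting $G(z):=\phi(z)(1+z)^{-N}$, the function $G$ is holomorphic on $S$, is still of order at most $2$ there (dividing by a polynomial does not raise the order), and on the two bounding rays $\partial S$ the hypothesis yields
\[
|G(z)|=\frac{|\phi(z)|}{|1+z|^N}\le \frac{C(1+|z|)^N}{2^{-N}(1+|z|)^N}=2^N C.
\]

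The heart of the argument is the Phragmén--Lindelöf step, which is where the two hypotheses interact. Since $\alpha<\pi/2$ we have $\pi/\alpha>2$, so I may fix an exponent $\rho$ with $2<\rho<\pi/\alpha$. For $z\in S$ one has $|\arg z|\le \alpha/2$, hence $|\rho\arg z|\le \rho\alpha/2<\pi/2$, and therefore $\mathrm{Re}(z^\rho)=|z|^\rho\cos(\rho\arg z)\ge \delta\,|z|^\rho$ with $\delta:=\cos(\rho\alpha/2)>0$. For $\varepsilon>0$ I would introduce the damped function $G_\varepsilon(z):=G(z)\,e^{-\varepsilon z^\rho}$. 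On $\partial S$ one then has $|G_\varepsilon(z)|\le 2^N C\,e^{-\varepsilon\delta|z|^\rho}\le 2^N C$, while inside $S$ the order bound gives, for any $\rho'\in(2,\rho)$ and all large $|z|$, the estimate $|G(z)|\le e^{|z|^{\rho'}}$, so that $|G_\varepsilon(z)|\le e^{|z|^{\rho'}-\varepsilon\delta|z|^\rho}\to 0$ as $|z|\to\infty$ in $S$. Applying the maximum modulus principle on the truncated sectors $S\cap\{|z|\le R\}$ — whose boundary consists of the two rays, where $|G_\varepsilon|\le 2^N C$, and the arc $\{|z|=R\}$, where the supremum tends to $0$ — and letting $R\to\infty$ shows $|G_\varepsilon|\le 2^N C$ throughout $S$; letting $\varepsilon\to 0$ gives $|G(z)|\le 2^N C$ on $S$. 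Finally $|\phi(z)|=|G(z)|\,|1+z|^N\le 2^N C\,(1+|z|)^N$ on $S$, and undoing the rotation yields the claim.

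The main obstacle is precisely this last step: one must manufacture the subharmonic comparison through the auxiliary factor $e^{-\varepsilon z^\rho}$ with $2<\rho<\pi/\alpha$ and verify genuine decay at infinity inside the sector. This is the only place where \emph{both} the order-$2$ growth of $\phi$ and the opening $\alpha<\pi/2$ are used together, since it is exactly their conjunction that guarantees an admissible $\rho$ with $2<\rho<\pi/\alpha$ exists; were $\alpha\ge\pi/2$ no such $\rho$ could be chosen and the conclusion would fail. By contrast, the reduction to a constant boundary bound via the holomorphic factor $(1+z)^{-N}$ and the initial rotation are routine bookkeeping, and the constant $2^N$ is simply what the crude comparison $|1+z|\ge \tfrac12(1+|z|)$ produces.
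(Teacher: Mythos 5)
The paper states this theorem as a quoted result (it is the Phragm\'en--Lindel\"of lemma used in Bonami--Demange--Jaming) and supplies no proof of its own, so there is nothing to compare against; your argument is the standard proof of this lemma and it is correct. The reduction $G=\phi\cdot(1+z)^{-N}$ with $|1+z|\ge\frac12(1+|z|)$ on the half-plane $\mathrm{Re}\,z\ge 0$, the choice of an auxiliary exponent $\rho$ with $2<\rho<\pi/\alpha$ (which exists precisely because $\alpha<\pi/2$), and the damping factor $e^{-\varepsilon z^{\rho}}$ followed by the maximum principle on truncated sectors and the limits $R\to\infty$, $\varepsilon\to 0$, are all carried out correctly and yield exactly the stated constant $2^{N}C$.
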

\begin{theorem} Let $m$ be even and $N$ be a positive integer.
Assume $f\in L^2(\mathbb{R}^m)\otimes Cl_{0,m}$ such that 
\begin{equation} 
\displaystyle\int_{\mathbb{R}^m}\int_{\mathbb{R}^m}\frac{||f(x)||_c||\mathcal{F}_{\pm}(f)(y)||_c}{(1+||x||_c+||y||_c)^N}e^{||x||_c||y||_c}dxdy <\infty.
\end{equation}
Then, 
$$f(x)=Q(x)e^{-a||x||_c^2}.$$
for some $ a>0$ and polynomial $Q$ with degree less than $\displaystyle\frac{N-m}{2}$.
\end{theorem}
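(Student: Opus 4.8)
The plan is to follow the complex-analytic scheme of Beurling--H\"ormander, as adapted by Bonami et al.\ \cite{key-3} and by Parui--Pusti \cite{key-5}, replacing the ordinary Fourier transform by $\mathcal{F}_{\pm}$ and exploiting the tools assembled above. We may assume $f\neq 0$. By the preceding Lemma, both $f$ and $g:=\mathcal{F}_{\pm}(f)$ lie in $B(\mathbb{R}^m)\otimes Cl_{0,m}$, so every integral below converges and the growth estimate $\|\mathcal{F}_{\pm}(f)(y)\|_c\leq Ce^{\|y\|_c^2/4}\|f\|_B$ proved above applies. The key reduction is that it suffices to show $g(y)=R(y)e^{-b\|y\|_c^2}$ for some polynomial $R$ and some $b>0$: once this is known, applying $\mathcal{F}_{\pm}$ once more and using the inversion $\mathcal{F}_+\mathcal{F}_+=\mathrm{id}$ together with the fact that $\mathcal{F}_{\pm}$ sends $P(\cdot)e^{-a\|\cdot\|_c^2}$ to $Q(\cdot)e^{-\|\cdot\|_c^2/(4a)}$ recovers $f(x)=Q(x)e^{-a\|x\|_c^2}$ with $a=1/(4b)$.

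To produce the Gaussian form of $g$, I would first upgrade the decay of $f$. Since $g\not\equiv 0$, one argues that $g$ cannot vanish almost everywhere off a ball (otherwise $f$ would be of exponential type, which is incompatible with the hypothesis and forces $f=0$); hence there is a sequence $y_n$ with $\|y_n\|_c\to\infty$ and $g(y_n)\neq 0$. Feeding these into the Fubini argument that produced inequality (4.2) yields $\int_{\mathbb{R}^m}\|f(x)\|_c e^{c\|x\|_c}\,dx<\infty$ for every $c>0$. Combined with the holomorphic extension of the Clifford kernel $K_{\pm}(x,z)=e^{\mp i\frac{\pi}{2}\Gamma_z}e^{-i\langle x,z\rangle}$ to complex arguments and a bound of the form $\|K_{\pm}(x,z)\|_c\leq C\,e^{\|x\|_c\,|z|}$, this shows that $g$ extends to an entire function on $\mathbb{C}^m$. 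Writing $g=\sum_A g_A e_A$ in the basis of $Cl_{0,m}$, each scalar component $g_A$ restricts along a complex line $\zeta\mapsto g_A(\zeta\omega)$ to an entire function, and using the real-axis bound $e^{\|y\|_c^2/4}$ one checks that $\phi_{A,\omega}(\zeta):=e^{b\zeta^2}g_A(\zeta\omega)$ is entire of order $2$.

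The analytic heart is then a Phragm\'en--Lindel\"of argument. On the real and imaginary rays the hypothesis localizes the integrability near the diagonal so as to furnish a polynomial majorant $|\phi_{A,\omega}(\zeta)|\leq C(1+|\zeta|)^{N'}$; applying the Phragm\'en--Lindel\"of principle stated above to the order-$2$ entire functions $\phi_{A,\omega}$ on angular sectors of opening less than $\pi/2$, and covering $\mathbb{C}$ by finitely many such sectors, propagates this polynomial bound to all of $\mathbb{C}$. By Liouville each $\phi_{A,\omega}$ is a polynomial, whence $g_A(y)=(\text{polynomial})\,e^{-b\|y\|_c^2}$ and so $g(y)=R(y)e^{-b\|y\|_c^2}$, completing the reduction. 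To identify the degree, I would substitute $f(x)=Q(x)e^{-a\|x\|_c^2}$ and $g(y)=\tilde Q(y)e^{-\|y\|_c^2/(4a)}$ back into the hypothesis: the exponents combine as $\|x\|_c\|y\|_c-a\|x\|_c^2-\|y\|_c^2/(4a)=-(\sqrt{a}\,\|x\|_c-\|y\|_c/(2\sqrt{a}))^2$, which vanishes exactly on the diagonal $\|y\|_c=2a\|x\|_c$, and integrability of the remaining factor over a tubular neighborhood of this diagonal forces $2\deg Q+m<N$, that is $\deg Q<(N-m)/2$.

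I expect the main obstacle to be the complex-analytic control of the Clifford kernel: unlike the Euclidean plane wave $e^{-i\langle x,y\rangle}$, the kernel $K_{\pm}$ is twisted by the bivector operator $\Gamma_z$, so establishing its holomorphic extension in $z$ together with a sharp bound $\|K_{\pm}(x,z)\|_c\leq C(1+\cdots)e^{\|x\|_c|z|}$ --- and thereby verifying that the auxiliary functions $\phi_{A,\omega}$ are genuinely entire of order $2$ with the polynomial ray bounds needed for the Phragm\'en--Lindel\"of step --- is considerably more delicate than in the classical and Dunkl settings, and is where the hypotheses on $m$ being even and on the explicit scalar components $K_0^{\pm},K_{ij}^{\pm}$ must be used.
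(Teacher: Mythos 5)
Your overall scheme (holomorphic extension, order-$2$ entire functions, Phragm\'en--Lindel\"of on sectors, a Liouville-type conclusion) is the same Bonami--Demange--Jaming scheme the paper follows, but you have dropped the one device that makes it run: the Gaussian regularization. The paper's Step 1 replaces $f$ by $g=f\ast_{Cl}e^{-\|\cdot\|_c^2/2}$, so that $\mathcal{F}_{\pm}(g)=\mathcal{F}_{\pm}(f)\,e^{-\|\cdot\|_c^2/2}$ has genuine Gaussian decay (property ii) there); it is exactly this decay that makes $g(z)=\mathcal{F}_\pm\mathcal{F}_\pm(g)(z)$ entire with $\|g(z)\|_c\leq Ce^{\|z\|_c^2}$, i.e.\ of order $2$. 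You instead work directly with $\mathcal{F}_{\pm}(f)$ and justify its entirety from $\int\|f(x)\|_c e^{c\|x\|_c}dx<\infty$ for every $c>0$. That condition does give an entire extension, but it gives no control on the order: the constants blow up uncontrollably as $c\to\infty$, and a function with $\int\|f\|_ce^{c\|x\|_c}dx<\infty$ for all $c$ can have a Fourier transform of infinite order (e.g.\ $\|f(x)\|_c\sim e^{-\|x\|_c\log(1+\|x\|_c)}$). Without order $2$, the Phragm\'en--Lindel\"of lemma (Theorem 4.2) is not applicable and the argument collapses; the real-axis bound $Ce^{\|y\|_c^2/4}$ says nothing about growth off the real axis.

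There is a second gap in the Phragm\'en--Lindel\"of step itself. Your auxiliary function $\phi_{A,\omega}(\zeta)=e^{b\zeta^2}g_A(\zeta\omega)$ presupposes the Gaussian constant $b$, which is precisely what the theorem is supposed to produce, so the argument is circular; and in any case $e^{bt^2}\cdot Ce^{t^2/4}$ is not polynomially bounded on the real ray, so the hypothesis of the sector lemma is never verified. The standard device --- and the one the paper invokes --- is the product $g(z)g(iz)$, or its iterated primitive $\Gamma(z)=\int_0^{z_1}\cdots\int_0^{z_m}g(u)g(iu)\,du$: if $g$ is a polynomial times $e^{-b z^2}$ the Gaussian factors cancel identically in $g(z)g(iz)$, and the polynomial ray bounds for this product are extracted from the estimate the paper labels 4i), namely $\int_{\|x\|_c\leq R}\int\|g(x)\|_c\|\mathcal{F}_\pm(g)(y)\|_ce^{\|x\|_c\|y\|_c}dxdy\leq C(1+R)^N$, which you never establish. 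Your closing degree count (integrability over a tube around $\|y\|_c=2a\|x\|_c$) is a reasonable consistency check, but it is not a substitute for these missing steps.
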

\begin{proof}
{\bf Step 1.}\\
Let $ g(x) =f\ast_{Cl} e^{-\frac{||.||_c^2}{2}}(x)$.\\
By lemma 4.1, we have $f\in B(\mathbb{R}^m)\otimes Cl_{0,m}$.
Thus,  $g\in B(\mathbb{R}^m)\otimes Cl_{0,m}$.\\
Theorem 3.6 and theorem 3.4 yield  

\begin{equation}\mathcal{F}_{\pm}(g)(x)=\mathcal{F}_{\pm}(f)(x)\mathcal{F}_{\pm}(e^{-\frac{||.||_c^2}{2}})(x)=\mathcal{F}_{\pm}(f)(x)e^{-\frac{||x||^2_c}{2}}.
\end{equation}
We will show that $g$ satisfies the following assumptions :\\
i) $$\displaystyle\int_{\mathbb{R}^m}||\mathcal{F}_{\pm}(g)(y)||_ce^{\frac{||y||_c^2}{2}}dy<\infty,$$
ii)$$||\mathcal{F}_{\pm}(g)(y)||_c\leq C e^{-\frac{||y||_c^2}{4}},$$
iii)  $$\displaystyle\int_{\mathbb{R}^m}\int_{\mathbb{R}^m}\frac{||g(x)||_c||\mathcal{F}_{\pm}(g)(x)||_ce^{||x||_c||y||_c}}{(1+||x||_c+||y||_c)^N}dxdy<+\infty$$
4i) $$\qquad\qquad\displaystyle\int_{||x||_c\leq R}\int_{\mathbb{R}^m}||g(x)||_c||\mathcal{F}_{\pm}(g)(y)||_ce^{||x||_c||y||_c}dxdy\leq C(1+R)^N.$$
Since $\mathcal{F}(f)\in B(\mathbb{R}^m)\otimes Cl_{0,m}$, i) is a simple deduction from (4.4).\\
\noindent Let's  prove ii).\\
\noindent Using (4.4) and theorem 3.2 , it  follows that 
$$\qquad\quad \qquad||\mathcal{F}_{\pm}(g)(y)||_c\leq C(1+A)^\frac{m-2}{2}||f||_Be^{-\frac{||y||_c^2}{2}} ,\quad\forall ||y||_c\leq A$$
and 
$$||\mathcal{F}_{\pm}(g)(y)||_c\leq C||f||_Be^{-\frac{||y||_c^2}{4}} ,\quad\forall ||y||_c>A.$$
Thus, we get ii)
where $C$ is constant depending on $f$.\\
In order to establish iii), we use (4.4), theorem 3.5 and theorem 3.7.\\
 Therefore, we find 
$$I:=\displaystyle\int_{\mathbb{R}^m}\int_{\mathbb{R}^m}\frac{||g(x)||_c||\mathcal{F}_{\pm}(g)(y)||_ce^{||x||_c||y||_c}}{(1+||x||_c+||y||_c)^N}dxdy $$
$$\qquad\qquad\qquad\qquad\quad\leq\displaystyle\int_{\mathbb{R}^m}\int_{\mathbb{R}^m}\displaystyle\int_{\mathbb{R}^m} \frac{||f(t)||_ce^{-\frac{||x-t||_c^2}{2}}||\mathcal{F}_{\pm}(f)(y)||_ce^{-\frac{||y||_c^2}{2}}e^{||x||_c||y||_c}}{(1+||x||_c+||y||_c)^N}dtdxdy$$
$$\qquad\quad\leq\displaystyle\int_{\mathbb{R}^m}\int_{\mathbb{R}^m}||f(t)||_c||\mathcal{F}_{\pm}(f)(y)||_cA(t,y)e^{||t||_c||y||_c}dtdy,$$
with $A(t,y):=e^{-\frac{||t||_c^2}{2}}e^{-\frac{||y||_c^2}{2}}e^{-||t||_c||y||_c}\displaystyle\int_{\mathbb{R}^m}\frac{e^{-\frac{||x||_c^2}{2}}e^{<x,t>}e^{||x||_c||y||_c}}{(1+||x||_c+||y||_c)^N}dx.$\\
\noindent We should  prove that 
\begin{equation}
A(t,y)\leq C( 1+||t||_c+||y||_c)^{-N}.
\end{equation}
According to Cauchy-Schwarz's inequality, we have
$$|<x,t>|\leq ||x||_c||t||_c.$$
Thus
$$\displaystyle\int_{\mathbb{R}^m}\frac{e^{-\frac{||x||_c^2}{2}}e^{<x,t>}e^{||x||_c||y||_c}}{(1+||x||_c+||y||_c)^N}dx\leq e^{\frac{(||t||_c+||y||_c)^2}{2}}\int_{\mathbb{R}^m}\frac{e^{-\frac{(||x||_c-||t||_c-||y||_c)^2}{2}}}{(1+||x||_c+||y||_c)^N}dx.$$
Hence 
$$A(t,y)\leq\displaystyle\int_{\mathbb{R}^m}\frac{e^{-\frac{(||x||_c-||t||_c-||y||_c)^2}{2}}}{(1+||x||_c+||y||_c)^N}dx.$$
Fix $0<c<1$.
Let $B=(1+||t||_c+||y||_c)$.
$$A(t,y)\leq \displaystyle\int_{|||x||_c-||t||_c-||y||_c|> cB}e^{-\frac{(||x||_c-||t||_c-||y||_c)^2}{2}}dx+\int_{|||x||_c-||t||_c-||y||_c|\leq cB}\frac{e^{-\frac{(||x||_c-||t||_c-||y||_c)^2}{2}}}{(1+||x||_c+||y||_c)^N}dx.$$
If $|||x||_c-||t||_c-||y||_c|\leq cB$, then  
$$1+||x||_c+||y||_c\geq 1+\frac{||t||_c}{2}-\frac{||||x||_c-||t||_c|}{2}+||y||_c$$ 
$$\qquad \qquad\qquad\qquad\qquad\quad\geq \frac{1}{2}+\frac{||t||_c}{2}+\frac{||y||_c}{2}-\frac{|||x||_c-||t||_c-||y||_c|}{2}$$
$$\geq \frac{(1-c)}{2}B.\qquad\quad$$
  The proof of (4.5) and  iii) is carried out by (4.3).\\
 4i) Fix $k>4$.
 $$J:=\displaystyle\int_{||x||_c\leq R}\int_{\mathbb{R}^m}||g(x)||_c||\mathcal{F}_{\pm}(g)(y)||_ce^{||x||_c||y||_c}dxdy$$
 $$=\displaystyle\int_{||x||_c\leq R}||g(x)||_c\left(\int_{||y||>kR}||\mathcal{F}_{\pm}(g)(y)||_ce^{||x||_c||y||_c}dy+\int_{||y||<kR}||\mathcal{F}_{\pm}(g)(y)||_ce^{||x||_c||y||_c}dy\right)dx.$$
 ii) implies  
$$\qquad\quad J\leq \displaystyle\int_{||x||_c\leq R}||g(x)||_c\left(\int_{||y||>kR}C e^{-(\frac{1}{4}-\frac{1}{k})||y||_c^2} dy+\int_{||y||<kR}||\mathcal{F}_{\pm}(g)(y)||_ce^{||x||_c||y||_c}dy\right)dx$$
$$\leq C ||g||_{1,c}+\int_{||x||_c\leq R}\int_{||y||<kR}||g(x)||_c||\mathcal{F}_{\pm}(g)(y)||_ce^{||x||_c||y||_c}dydx.\qquad$$
Multiplying and dividing by $(1+||x||_c+||y||_c)^N$ in the integral of right side, we obtain 
$$J\leq C(1+R)^N\int_{||x||_c\leq R}\int_{||y||<kR}\frac{||g(x)||_c||\mathcal{F}_{\pm}(g)(y)||_ce^{||x||_c||y||_c}}{(1+||x||_c+||y||_c)^N}dxdy.$$
iii) completes the proof of 4i).\\
{\bf Step 2.}
Combining theorem 3.3, lemma 3.1 and ii),   we get  $g$ admits an holomorphic extension to $\mathbb{C}\otimes \mathbb{R}^m$. Moreover, for all $z\in\mathbb{C}\otimes\mathbb{R}^m$
$$||g(z)||_c=||\mathcal{F}_\pm\circ \mathcal{F}_\pm(g)(z)||_c\qquad\qquad$$
$$\qquad\qquad\qquad\qquad\leq(2\pi)^{-\frac{m}{2}}\displaystyle\int_{\mathbb{R}^m}e^{||y||_c||z||_c}||\mathcal{F}_\pm(g)(y)||_cdy$$
$$\qquad\qquad\qquad\quad\leq (2\pi)^{-\frac{m}{2}}C\displaystyle\int_{\mathbb{R}^m}e^{||y||_c||z||_c} e^{-\frac{||y||_c^2}{4}}dy$$
$$\leq Ce^{||z||_c^2}.\qquad\quad$$
Thus, $g$ is entire of order $2$.\\
We should prove that for all $\mathbb{C}\otimes \mathbb{R}^m$, $g(z)g(iz)$  is a polynomial.\\
Using theorem 3.3 and lemma 3.1, it follows that for all $x\in\mathbb{R}^m$ and $\theta \in\mathbb{R}$ 
\begin{equation}||g(e^{i\theta}x)||_c\leq C \displaystyle\int_{\mathbb{R}^m}e^{||x||_c||y||_c}||\mathcal{F}_\pm(g)(y)||_cdy.
\end{equation}
Let
 $$\Gamma(z)=\displaystyle\int_0^{z_1}...\int_0^{z_m}g(u)g(iu)du.$$
Following the proof of \cite{key-3}, we conclude.
\end{proof}
\section{Applications to other uncertainty principles }
In this section, we show the relevance of Beurling theorem since it entail Hardy, cowling- price and Gelfand Shilov theorems. 
\begin{corollary}[Hardy theorem] Let  $m$ be even.
Assume that  $f\in L^2(\mathbb{R}^m)\otimes Cl_{0,m}$ satisfies
\begin{equation}
||f(x)||_c\leq C (1+||x||_c)^N e^{-a||x||_c^2}
\end{equation}
and 
\begin{equation}
||\mathcal{F}(f)(y)||_c \leq C (1+||y||_c)^Ne^{-b ||y||_c^2},\quad
\end{equation}
  for some $N\in \mathbb{N}$.
\noindent Then, three cases can occur\\ 
i) If $ab>\frac{1}{4}$, then $f=0$.\\
ii)  If $ab=\frac{1}{4}$, then $f=P(x)e^{-a||x||_c^2}$ whith  $deg P\leq N$.\\
ii) If $ab<\frac{1}{4}$, there are many functions satisfying these estimates.
\end{corollary}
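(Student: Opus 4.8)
The plan is to deduce all three cases from Beurling's theorem (Theorem 4.3) by substituting the pointwise bounds (5.1)--(5.2) into the Beurling double integral and reading off, from the quadratic form appearing in the exponent, whether the polynomial weight is needed for convergence. Writing $s=\|x\|_c$ and $t=\|y\|_c$, the product of the two hypotheses bounds the integrand of the Beurling integral with weight exponent $N'$ by
\[
C^{2}\,\frac{(1+s)^{N}(1+t)^{N}}{(1+s+t)^{N'}}\,e^{-a s^{2}+s t-b t^{2}} .
\]
The sign of the form $-as^{2}+st-bt^{2}$ is governed by the discriminant $ab-\tfrac14$: it is negative definite when $ab>\tfrac14$ and negative semidefinite (degenerate along the line $\sqrt a\,s=\sqrt b\,t$) when $ab=\tfrac14$. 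Passing to polar coordinates in $x$ and $y$ supplies the radial factors $s^{m-1}t^{m-1}$, after which everything reduces to a two-dimensional integral over $(s,t)\in(0,\infty)^{2}$.

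\textbf{Case $ab>1/4$.} Negative definiteness gives genuine Gaussian decay in both $s$ and $t$, so the integral converges for every $N'\ge 0$, in particular for $N'=1$. Applying Theorem 4.3 with this value yields $f(x)=Q(x)e^{-a'\|x\|_c^{2}}$ with $\deg Q<\tfrac{1-m}{2}$. Since $m$ is even, $m\ge 2$, so $\tfrac{1-m}{2}<0$, forcing $Q\equiv 0$ and hence $f=0$.

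\textbf{Case $ab=1/4$.} Now the form equals $-(\sqrt a\,s-\sqrt b\,t)^{2}$, which is only semidefinite, so the Gaussian factor ceases to decay along the critical line and convergence must be extracted from the polynomial weight. The key estimate is that after the radial reduction the integral is finite as soon as $N'$ exceeds $2N+2m-1$: changing variables to $u=\sqrt a\,s-\sqrt b\,t$ and a coordinate along the line, one bounds $(1+s)^{N}(1+t)^{N}s^{m-1}t^{m-1}$ by a power of $(1+s+t)$ of degree $2N+2m-2$ on the line, observes that the transverse integral $\int e^{-u^{2}}du$ is finite, and that $(1+s+t)^{2N+2m-2-N'}$ is integrable along the line precisely under the stated condition. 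Choosing $N'=2N+2m$ (a positive integer) makes the hypothesis of Theorem 4.3 hold, so $f(x)=Q(x)e^{-a'\|x\|_c^{2}}$ for some $a'>0$ and polynomial $Q$ of degree $<N+\tfrac m2$. It then remains to sharpen this to $a'=a$ and $\deg Q\le N$. Apply Theorem 3.4 (summing over the homogeneous components of $Q$) to get $\mathcal{F}_{\pm}(f)(y)=R(y)e^{-\|y\|_c^{2}/(4a')}$. Feeding $f=Qe^{-a'\|\cdot\|_c^{2}}$ into (5.1) gives $\|Q(x)\|_c\le C(1+\|x\|_c)^{N}e^{-(a-a')\|x\|_c^{2}}$, and feeding $\mathcal{F}_{\pm}(f)=Re^{-\|\cdot\|_c^{2}/(4a')}$ into (5.2) gives $\|R(y)\|_c\le C(1+\|y\|_c)^{N}e^{-(b-\frac1{4a'})\|y\|_c^{2}}$, where $b=\frac1{4a}$. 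If $a'<a$ the first inequality forces $Q\equiv 0$; if $a'>a$ then $b-\frac1{4a'}>0$ and the second forces $R\equiv 0$, hence $f=0$; in either degenerate subcase $f=0$, which has the claimed form. Otherwise $a'=a$, and the first inequality reduces to $\|Q(x)\|_c\le C(1+\|x\|_c)^{N}$, whence $\deg Q\le N$, giving $f(x)=P(x)e^{-a\|x\|_c^{2}}$ with $\deg P\le N$.

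\textbf{Case $ab<1/4$.} Here one exhibits a family. For any $c$ with $a\le c\le \frac1{4b}$ --- a nondegenerate interval precisely because $ab<\tfrac14$ --- set $f_{c}(x)=e^{-c\|x\|_c^{2}}$. By Theorem 3.4 (with $P=1$) its transform is $\mathcal{F}_{\pm}(f_{c})(y)=C'e^{-\|y\|_c^{2}/(4c)}$, and the conditions $c\ge a$ and $\frac1{4c}\ge b$ make (5.1)--(5.2) hold with $N=0$; letting $c$ range over the interval produces infinitely many admissible functions. The main obstacle of the whole argument is the convergence estimate of Case $ab=\tfrac14$ along the degenerate direction, together with the subsequent refinement via Theorem 3.4 and the two bounds, which upgrades the crude output $a'>0$, $\deg Q<N+\tfrac m2$ of Beurling's theorem to the sharp conclusions $a'=a$ and $\deg Q\le N$.
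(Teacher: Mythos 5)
Your proof is correct and follows exactly the route the paper intends but does not write out: Section 5 states this corollary without proof as a consequence of Theorem 4.3, and your argument — substituting the Hardy bounds into the Beurling integral, letting the discriminant of the quadratic form $-as^{2}+st-bt^{2}$ separate the three cases, and using Theorem 3.4 to sharpen the critical case to $a'=a$ and $\deg Q\le N$ — supplies the missing derivation correctly.
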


\begin{corollary}[Cowling and Price theorem] Let $N\in \mathbb{N}$, $1<p,q<\infty$  and $m$ be even.
Let  $f\in L^2(\mathbb{R}^m)\otimes Cl_{0,m}$ be  such that
\begin{equation}
\displaystyle\int_{\mathbb{R}^m}\frac{e^{\alpha p||x||_c^2}||f(x)||_c^p}{(1+||x||_c)^N}dx<\infty
\end{equation}
\begin{equation}
\displaystyle\int_{\mathbb{R}^m}\frac{e^{\beta q ||y||_c^2}||\mathcal{F}(f)(y)||_c^q }{(1+||y||_c)^N}dy<\infty
\end{equation}
with $\alpha \beta\geq\frac{1}{4}$  and  $\frac{1}{p}+\frac{1}{q}=1$. 
Then\\ 
\noindent i) $f=0$,  if $\alpha \beta>\frac{1}{4}$.\\
ii) $f=P(x)e^{-\alpha||x||_c^2}$ whith  $P$ is a polynomial of degree  $<min\{\frac{N-m}{p},\frac{N-m}{q}\}$, if $\alpha \beta=\frac{1}{4}$.
\end{corollary}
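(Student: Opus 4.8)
The plan is to derive this corollary entirely from Beurling's theorem (Theorem 4.3). The strategy has three stages: first linearize the product $||x||_c||y||_c$ so that the two Cowling--Price integrals decouple into the exponentially weighted quantities they control; second, verify that these two conditions force the Beurling double integral to converge for a suitable exponent $N'$, so that Theorem 4.3 yields $f(x)=Q(x)e^{-a||x||_c^2}$; and third, plug this form back into the hypotheses (using Theorem 3.3 for the transform side) to pin down $a$ and $\deg Q$.

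For the first stage I would set $F(x)=||f(x)||_c\,e^{\alpha||x||_c^2}$ and $G(y)=||\mathcal{F}_\pm(f)(y)||_c\,e^{\beta||y||_c^2}$. The arithmetic--geometric mean inequality gives $2\sqrt{\alpha\beta}\,||x||_c||y||_c\leq \alpha||x||_c^2+\beta||y||_c^2$, and since $\alpha\beta\geq\frac14$ this simplifies to $e^{||x||_c||y||_c}\leq e^{\alpha||x||_c^2}e^{\beta||y||_c^2}$. Consequently the Beurling integrand is dominated by $F(x)G(y)\,(1+||x||_c+||y||_c)^{-N'}$, and the hypotheses say precisely that $u(x):=F(x)(1+||x||_c)^{-N/p}\in L^p$ and $v(y):=G(y)(1+||y||_c)^{-N/q}\in L^q$.

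The main obstacle is the second stage, producing a finite Beurling exponent from these $L^p$ and $L^q$ data. I would take $N'=N+m+1$ and split the denominator with Young's inequality $(1+||x||_c)^{1/p}(1+||y||_c)^{1/q}\leq 1+||x||_c+||y||_c$, which after raising to the power $N$ gives $(1+||x||_c)^{N/p}(1+||y||_c)^{N/q}\leq(1+||x||_c+||y||_c)^{N}$ and leaves a spare factor $(1+||x||_c+||y||_c)^{-(m+1)}$. Splitting this spare factor with Hölder exponents $p,q$ and integrating the inner variable, one meets $\int_{\mathbb{R}^m}(1+||x||_c+||y||_c)^{-(m+1)}\,dy\leq C(1+||x||_c)^{-1}\leq C$ and its symmetric counterpart, both convergent because $m+1>m$; Hölder's inequality then bounds the whole double integral by $C\,||u||_p\,||v||_q<\infty$. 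Thus the Beurling hypothesis holds with exponent $N'$, and Theorem 4.3 gives $f(x)=Q(x)e^{-a||x||_c^2}$ for some $a>0$ and polynomial $Q$.

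For the final stage I would use Theorem 3.3, which gives $\mathcal{F}_\pm(f)(y)=\tilde Q(y)\,e^{-||y||_c^2/(4a)}$ with $\deg\tilde Q=\deg Q$ (the grading is preserved and, by the involution $\mathcal{F}_+\mathcal{F}_+=\mathrm{id}$ of Theorem 3.3, $\tilde Q\neq 0$ when $Q\neq 0$). Substituting $||f(x)||_c=||Q(x)||_c\,e^{-a||x||_c^2}$ into the first hypothesis forces the factor $e^{(\alpha-a)p||x||_c^2}$ to be non-increasing, i.e. $a\geq\alpha$; substituting the transform into the second hypothesis forces $\frac{1}{4a}\geq\beta$, i.e. $a\leq\frac{1}{4\beta}$. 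Since $\alpha\beta\geq\frac14$ entails $\frac{1}{4\beta}\leq\alpha$, the chain $\alpha\leq a\leq\frac{1}{4\beta}\leq\alpha$ is impossible when $\alpha\beta>\frac14$, whence $Q=0$ and $f=0$; when $\alpha\beta=\frac14$ it collapses to $a=\alpha=\frac{1}{4\beta}$. In this equality case both hypotheses reduce to the purely polynomial bounds $\int_{\mathbb{R}^m}||Q(x)||_c^p(1+||x||_c)^{-N}\,dx<\infty$ and $\int_{\mathbb{R}^m}||\tilde Q(y)||_c^q(1+||y||_c)^{-N}\,dy<\infty$, which hold exactly when $\deg Q<\frac{N-m}{p}$ and $\deg Q=\deg\tilde Q<\frac{N-m}{q}$, giving $\deg Q<\min\{\frac{N-m}{p},\frac{N-m}{q}\}$ as claimed. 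The degree bookkeeping here is routine once Theorem 3.3 is in hand; the genuine work is the combined Young/Hölder estimate of the preceding step.
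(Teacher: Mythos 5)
The paper states this corollary with no proof at all --- Section 5 simply asserts that Cowling--Price follows from Beurling's theorem --- so your proposal supplies the derivation the authors leave implicit, and it is the intended route (reduce to Theorem 4.3). The argument is correct: the arithmetic--geometric mean step $\|x\|_c\|y\|_c\le 2\sqrt{\alpha\beta}\,\|x\|_c\|y\|_c\le\alpha\|x\|_c^2+\beta\|y\|_c^2$ decouples the exponential; the choice $N'=N+m+1$ together with $(1+\|x\|_c)^{N/p}(1+\|y\|_c)^{N/q}\le(1+\|x\|_c+\|y\|_c)^{N}$ and H\"older on the product space works because the spare factor $(1+\|x\|_c+\|y\|_c)^{-(m+1)}$ integrates in either variable to a bound uniform in the other; and the back-substitution correctly forces $\alpha\le a\le\frac{1}{4\beta}$, hence $f=0$ when $\alpha\beta>\frac14$ and $a=\alpha=\frac{1}{4\beta}$ in the critical case. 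Two points should be made explicit rather than parenthetical. First, $\alpha,\beta>0$ is tacitly assumed (the paper omits it in this corollary though it states it for Gelfand--Shilov); without it the AM--GM step fails. Second, to convert $\deg\tilde Q<\frac{N-m}{q}$ into a bound on $\deg Q$ you need the full equality $\deg\tilde Q=\deg Q$, not merely $\tilde Q\neq 0$: this does hold, because Theorem 3.3 sends each homogeneous component $\mathcal{P}_k\to\mathcal{P}_k$ and the involution $\mathcal{F}_+\mathcal{F}_+=\mathrm{id}$ (valid for $m$ even) makes that map a bijection, so the top-degree component of $Q$ survives; but as written your justification only establishes $\tilde Q\neq0$, which would not suffice. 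With those two clarifications the proof is complete.
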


\begin{corollary}[Gelfand Shilov theorem] Let $N\in \mathbb{N}$, $1<p,q<\infty$, $\alpha , \beta >0$  and $m$ be even.
Let  $f\in L^2(\mathbb{R}^m)\otimes Cl_{0,m}$  satisfy 
\begin{equation}
\displaystyle\int_{\mathbb{R}^m}\frac{||f(x)||_ce^{\frac{(2\alpha||x||_c)^p}{p}}}{(1+||x||_c)^N}dx<\infty
\end{equation}
\begin{equation}
\displaystyle\int_{\mathbb{R}^m}\frac{||\mathcal{F}(f)(y)||_ce^{\frac{(2\beta ||y||_c)^q}{q}}}{(1+||y||_c)^N}dy<\infty
\end{equation}
with $\alpha \beta\geq\frac{1}{4}$  and  $\frac{1}{p}+\frac{1}{q}=1$. We have the following results :\\ 
i) If $\alpha \beta>\frac{1}{4}$ or $(p,q)\neq (2,2)$, then $f=0$.\\
ii) If $\alpha \beta=\frac{1}{4}$ and $p=q=2$, then $f=P(x)e^{- 2 \alpha^2||x||_c^2}$ where $P$ is a polynomial with degree less than $N-m$.
\end{corollary}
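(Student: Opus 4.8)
The plan is to reduce the statement to Beurling's theorem (Theorem 4.3): first I would show that the two separate integrability hypotheses force the joint Beurling integral to be finite, which gives $f(x)=Q(x)e^{-a||x||_c^2}$; then I would feed this form back into the hypotheses to pin down $a$ and the degree of $Q$, producing the three cases.

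For the reduction I would convert the product of the two one-variable weights into the weight $e^{||x||_c||y||_c}$ by Young's inequality $ab\le \frac{a^p}{p}+\frac{b^q}{q}$, valid since $\frac1p+\frac1q=1$. Taking $a=2\alpha||x||_c$ and $b=2\beta||y||_c$ gives
\[
4\alpha\beta\,||x||_c||y||_c\le \frac{(2\alpha||x||_c)^p}{p}+\frac{(2\beta||y||_c)^q}{q},
\]
and, since $\alpha\beta\ge\frac14$, the left side dominates $||x||_c||y||_c$, so
\[
e^{||x||_c||y||_c}\le e^{\frac{(2\alpha||x||_c)^p}{p}}\,e^{\frac{(2\beta||y||_c)^q}{q}}.
\]
Combining this with the elementary estimate $(1+||x||_c)^N(1+||y||_c)^N\le(1+||x||_c+||y||_c)^{2N}$ and Tonelli's theorem, the Beurling integral of exponent $2N$ is bounded above by the product of the two hypotheses, hence it is finite. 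Theorem 4.3 (applied with $2N$ in place of $N$) then yields $f(x)=Q(x)e^{-a||x||_c^2}$ for some $a>0$ and some polynomial $Q$.

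Next I would compute the transform by Theorem 3.4, obtaining $\mathcal{F}(f)(y)=\widetilde Q(y)e^{-||y||_c^2/(4a)}$ with $\deg\widetilde Q=\deg Q$, and substitute both expressions into the hypotheses. Everything then reduces to the convergence of integrals whose exponents are $-a||x||_c^2+\frac{(2\alpha||x||_c)^p}{p}$ and $-\frac{||y||_c^2}{4a}+\frac{(2\beta||y||_c)^q}{q}$. Since $\frac1p+\frac1q=1$, the condition $(p,q)\neq(2,2)$ forces exactly one of $p,q$ to exceed $2$: if $p>2$ the $x$-exponent tends to $+\infty$ and the first integral diverges unless $Q=0$, while if $q>2$ the second integral forces $\widetilde Q=0$, so in either case $f=0$. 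If $p=q=2$ the exponents become $(2\alpha^2-a)||x||_c^2$ and $(2\beta^2-\frac1{4a})||y||_c^2$, so convergence requires $2\alpha^2\le a\le \frac1{8\beta^2}$; this interval is empty when $\alpha\beta>\frac14$ (giving $f=0$) and collapses to the single value $a=2\alpha^2$ when $\alpha\beta=\frac14$. In the latter case the $x$-exponent vanishes and the first hypothesis reduces to $\int_{\mathbb{R}^m}\frac{||Q(x)||_c}{(1+||x||_c)^N}\,dx<\infty$, which holds precisely when $\deg Q<N-m$, giving $f(x)=P(x)e^{-2\alpha^2||x||_c^2}$ with $\deg P<N-m$.

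The Young-inequality and Tonelli steps are routine. The delicate point is the final case analysis: one must check that the Gaussian-decay requirements coming from the $x$- and $y$-integrals together pin $a$ down exactly --- leaving no admissible $a$ when $\alpha\beta>\frac14$ and forcing $a=2\alpha^2$ when $\alpha\beta=\frac14$ --- and then convert the resulting polynomial-times-reciprocal integrability in $\mathbb{R}^m$ into the \emph{sharp} degree bound $\deg P<N-m$, rather than the weaker $\deg Q<N-\frac m2$ that Theorem 4.3 alone provides.
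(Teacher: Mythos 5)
Your proposal is correct and follows exactly the route the paper intends for Section 5 (deducing the corollary from Beurling's Theorem 4.3 via Young's inequality, then substituting $f=Q(x)e^{-a||x||_c^2}$ back into the hypotheses to pin down $a$ and sharpen the degree bound to $\deg P<N-m$); the paper in fact states this corollary without any written proof, so your argument supplies precisely the details it leaves implicit. The only point worth making explicit is the standard fact that for a nonzero polynomial $Q$ of degree $k$ one has $\int_{\mathbb{R}^m}||Q(x)||_c(1+||x||_c)^{-N}dx<\infty$ if and only if $k<N-m$, which you use in the final step.
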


\end{document}